\newtheorem{thm}{Theorem}
\newtheorem{rmq}{Remark}
\newtheorem{defi}{Definition}
\newtheorem{prop}{Proposition}
\newtheorem{corol}{Corollary}
\newcommand{\C}{\mathcal{C}}
\newcommand{\St}{\mathcal{S}}
\newcommand{\R}{\mathbb{R}}
\newcommand{\E}{\mathbb{E}}
\newcommand{\Jac}{\mathrm{Jac}}
\title{Isoperimetric and geometric inequalities in quantitative form: Stein's method approach}
\author{Jordan Serres \thanks{Institut de Mathématiques de Toulouse, France, jserres@insa-toulouse.fr} }
\begin{document}
\maketitle

\begin{center}
\textbf{Abstract}
\end{center}
\begin{quotation}
We adapt Stein's method to isoperimetric and geometric inequalities. The main challenge is the treatment of boundary terms. We address this by using an elliptic PDE with an oblique boundary condition. We apply our geometric formulation of Stein's method to obtain stability of the Brock-Weinstock inequality, stability of the isoperimetric inequality under a constraint on Steklov's first non-zero eigenvalue, and stability for the combination of weighted and unweighted perimeters. All stability results are formulated with respect to the $\alpha$-Zolotarev distance, $\alpha\in(0,1]$, that we introduce to interpolate between the Fraenkel asymmetry and the Kantorovich distance.
\end{quotation}

\section{Introduction}

\subsection{Isoperimetric and geometric inequalities in quantitative form}

In recent years, a great deal of research has gone into obtaining quantitative versions of well-known geometric inequalities. These type of results, known as stability results for functional and geometric inequalities, are examples of inverse problems. The general form of these problems is as follows. We start with an inequality whose extremizers are known, and ask whether the deficit term in this inequality can be bounded below by a certain distance to the set of extremizers, see \cite{figalli2013stability} for a more detailed presentation of the topic. Stability results have been obtained for a large number of functional and geometric inequalities, such as the Faber-Krahn and the Szegö-Weinberger inequalities (see the survey \cite{brasco2016spectral}), the Poincaré inequality (see e.g. \cite{CFP, FGS, jordan2023stability}), or the Sobolev inequality (see e.g. \cite{cianchi2009sharp, nobili-violo}) to name but a few.
In this paper, we focus on the quantitative form of the isoperimetric inequality and the Brock-Weinstock inequality. The isoperimetric inequality is the famous result that among all possible shapes of a given volume, the one with the smallest area is a ball. Expressed by a formula, this gives that for any sufficiently smooth $\Omega\subset\R^d$, it holds that
\[
\frac{|\partial\Omega|}{|\Omega|^{\frac{d-1}{d}}} \geq \frac{|\partial B|}{|B|^{\frac{d-1}{d}}},
\]
where $B$ stands for a ball with same volume as $\Omega$, and $|\partial \cdot |$ denotes the $(d-1)$-Hausdorff measure of the boundary of a set. The problem of the stability of the isoperimetric inequality consists in showing that, when $|\Omega|=|B_1|$ has the same volume as the standard unit ball $B_1$, the isoperimetric deficit $|\partial\Omega|-|\partial B_1|$ controls a certain distance to $B_1$. The history of the stability of the isoperimetric inequality began with Fuglede's perturbative result \cite{fuglede1989stability}, followed by Hall's result for sets of finite perimeter \cite{hall1992quantitative}, which was refined with a sharp exponent by Fusco, Maggi and Pratelli \cite{FMP08}, based on a symmetrization method. The same results were then derived using optimal transport techniques by Figalli, Maggi and Pratelli \cite{figalli2010mass}, which were also used to prove quantitative anisotropic isoperimetric inequalities. We refer the reader to Fusco's survey \cite{fusco2015quantitativesurvey} for a detailed overview on the topic. The quantity used to measure the distance between $\Omega$ and the ball is the following $L^1$-distance between sets, known as the Fraenkel asymmetry
\begin{equation}\label{eq:fraenkel}
\alpha(\Omega) := \min_{x\in\R^d} \left\{ \frac{|\Omega\Delta B_r(x)|}{|B_r(x)|}\,;\, |B_r(x)|=|\Omega| \right\},
\end{equation}
where $B_r(x)$ stands for the ball centered at $x$ of radius $r>0$, and $\Delta$ denotes the symmetric difference of two sets.\newline \newline
{\textbf{Theorem.} (\cite{FMP08} Fusco-Maggi-Pratelli, 2008)\\
There exist a constant $C_d$ depending only on the dimension $d$, such that for any measurable set $\Omega$ of finite measure and normalized such that $|\Omega|=|B_1|$,
\[
|\partial\Omega| - |\partial B_1| \geq C_d\, \alpha(\Omega)^2.
\]
}
This result was also improved by Fusco and Julin \cite{fuscojulin14}, where they replaced the Fraenkel asymmetry by the oscillation index, defined as
\begin{equation}\label{eq:oscillationindex}
\beta(\Omega) := \min_{y\in\R^d} \left\{ \left(\int_{\partial E} \left| \nu - \nu_{r,y}(\pi_{y,r}(x)) \right|^2 d\mathcal{H}^{d-1}(x) \right)^{1/2} \right\}
\end{equation}
where $\nu$ denotes the unit outward-pointing normal vector of $\partial\Omega$, $\nu_{r,y}$ denotes the unit outward-pointing normal vector of $\partial B_r(y)$, $\pi_{y,r}(x)$ denotes the projection of $x$ on the boundary of $B_r(y)$, and $\mathcal{H}^{d-1}$ stands for the $(d-1)$-dimensional Hausdorff measure. 
The isoperimetric inequality also admits weighted versions, were the usual perimeter is replaced by some weighted perimeter
\[
\mathrm{Per}_{f} (\partial \Omega) := \int_{\partial\Omega} f(x)\,d\mathcal{H}^{d-1}(x)
\]
for some weight function $f:\R^d\to\R_+$. Weighted isoperimetric problems, also known as isoperimetric problems with density \cite{morgan2005manifolds}, are an active research topic that has recently been the subject of numerous studies.
To mention just a few, let us mention the works of Rosales, Canete, Bayle and Morgan \cite{rosales2008isoperimetric} and also Morgan and Pratelli \cite{morgan2013existence} on the existence of isoperimetric regions in $\R^n$ with density, the results of Canete, Miranda and Vittone for isoperimetric problems in planes with density \cite{canete2010some}, or Csató's work in the case of weight functions of the form $|x|^p$ in the plane \cite{csato}.
We should also mention Borell's seminal paper on the case where the density is Gaussian \cite{borell1975brunn},
the results of Bobkov and Ledoux in the case of densities that are $\kappa$-concave probability measures \cite{bobkov2009weighted},
Milman's work in the case of a density satisfying a curvature dimension condition \cite{milman2015sharp},
Cabré, Ros-Oton and Serra's use of the ABP method to derive sharp isoperimetric inequalities in the context of a homogeneous weight in a convex cone \cite{cabre2016sharp}
and also Chambers' proof of the log-convex density conjecture \cite{chambers2019proof}.
The perimeter with density $f(x)=|x|^2$ is known as the boundary momentum and  its importance can be emphasized because of its connection with Steklov's spectral problem. The sharp isoperimetric inequality for the boundary momentum was proved by Betta, Brock, Mercaldo and Posteraro in \cite{betta1999weighted}, and states that for all $\Omega$ normalized such that $|\Omega|=|B_1|$,
\begin{equation}\label{eq:weightedisop}
\mathrm{Per}_{|x|^2} (\partial \Omega) \geq \mathrm{Per}_{|x|^2} (\partial B_1) 
\end{equation}
Note that in the case of the standard unit ball $B_1$, the weighted and unweighted perimeters coincide: $\mathrm{Per}_{|x|^2} (\partial B_1) = |\partial B_1|$. 
Quantative forms for weighted isoperimetric inequalities is a recent and active research topic.
These include the results of Brasco, De Philippis and Ruffini under certain convexity assumptions on the weight function \cite{BrascoDePhilippisRuffini},
the work of Cinti, Glaudo, Pratelli, Ros-Oton and Serra in the case of convex cones with homogeneous weights \cite{cinti2022sharp},
and Fusco and La Manna's stability result for log-convex densities \cite{fusco2023some}.
In particular, Brasco, De Philippis and Ruffini's theorem applies to the boundary momentum and gives the following stability estimate for Inequality \eqref{eq:weightedisop}.\newline \newline
{\textbf{Theorem.} (\cite{BrascoDePhilippisRuffini} Brasco-De Philippis-Ruffini, 2012)\\
There exist a constant $C_d$ depending only on the dimension $d$, such that for any open bounded Lipschitz set $\Omega$ normalized such that $|\Omega|=|B_1|$,
\[
\int_{\partial\Omega} |x|^2\,d\mathcal{H}^{d-1}(x) \geq \int_{\partial B_1} |x|^2\,d\mathcal{H}^{d-1}(x)\left[1 + C_d \left(\frac{|\Omega\Delta B_1|}{|B_1|}\right)^2 \right],
\]
where $\Delta$ denotes the symmetric difference of sets.\newline\newline
}
Among spectral geometric inequalities, the most famous is probably the Faber-Krahn inequality \cite{faber1923beweis, krahn1925rayleigh}, which states that the first Dirichlet eigenvalue of a domain $\Omega\subset\R^d$ is always larger than the corresponding eigenvalue for a ball of the same volume.
The study of the stability of such inequalities goes back to the work of Hansen and Nadirashvili \cite{hansen1994isoperimetric} and Melas \cite{melas1992stability}. Hansen and Nadirashvili showed that when the first Dirichlet eigenvalue of a domain is close to that of a ball of the same volume, then this domain is close to being a ball, in the sense that the ratio of its inradius to its circumradius is close to one, and Melas showed the same type of estimate in $L^1$ distance in the case of a bounded convex domain. Bhattacharya and Weitsman \cite{weitsmanbhattacharya} and independently Nadirashvili \cite{nadirashvili1996conformal} conjectured that the deficit in the Faber-Krahn inequality controls the square of the Fraenkel asymmetry \eqref{eq:fraenkel}. This was partially solved by Bhattacharya \cite{bhattacharya2001some}, Fusco, Maggi and Pratelli \cite{fusco2009stabilityFK} and also Povel \cite{povel1999confinement} and Sznitman \cite{sznitman1997fluctuations} with probabilistic applications. The conjecture has been fully solved by Brasco, De Philippis and Velichkov \cite{10.1215/00127094-3120167}, proving the following. \newline\newline
{\textbf{Theorem.} (\cite{10.1215/00127094-3120167} Brasco-De Philippis-Velichkov, 2015)\\
There exist a constant $C_d$ depending only on the dimension $d$, such that for any open set $\Omega$ with finite measure and normalized such that $|\Omega|=|B_1|$,
\[
\lambda_1(\Omega) \geq \lambda_1(B_1) + C_d\, \alpha(\Omega)^2,
\]
where $\lambda_1$ denotes the first Dirichlet eigenvalue and $\alpha$ denotes the Fraenkel asymmetry \eqref{eq:fraenkel}.\newline\newline
}
The Steklov spectral problem on a domain $\Omega\subset\R^d$ is given by
\[
\left\{
    \begin{array}{ll}
        \Delta u_k(x) = 0, & x\in \Omega \\
        \nabla u_k(x) \cdot \nu(x) = \sigma_k\,u_k(x),\, & x\in \partial \Omega
    \end{array}
\right.
\]
where $\Delta$ is the Laplacian, the $u_k$ are the Steklov eigenfunctions of $\Omega$, and the eigenvalues $(\sigma_k)_{k\geq 0}$ are its Steklov spetrum. The functions $u_k$ correspond to the harmonic extensions of the eigenfunctions of the Dirichlet-to-Neumann operator $\mathcal{D}: \C^\infty(\partial\Omega)\to \C^\infty(\partial\Omega)$, defined by $\mathcal{D}(f) = \nabla f\cdot \nu$. For physical motivation and context, we refer the reader to \cite{kuznetsov2014legacy}, and for recent developments and open questions, we refer to the survey \cite{colbois2024some}. The first eigenvalue $\sigma_0$ is zero and its eigenspace is formed by the constant functions. Of greater importance is the first non-zero Stecklov eigenvalue $\sigma_1>0$, which is solution of the following variational principle
\[
\sigma_1 = \min\left\{ \int_\Omega |\nabla u(x)|^2\, dx\,:\, \int_{\partial\Omega} u(x)^2 d\mathcal{H}^{d-1}(x)=1, \int_{\partial\Omega} u(x) d\mathcal{H}^{d-1}(x)=0, u\in H^1(\Omega) \right\}.
\]
In particular, we recognize the Sobolev trace inequality
\[
\sigma_1\,\int_{\partial\Omega} u(x)^2 d\mathcal{H}^{d-1}(x) \leq \int_\Omega |\nabla u(x)|^2\, dx,
\]
which holds for all $u\in H^1(\Omega)$ centered as $\int_{\partial\Omega} u(x) d\mathcal{H}^{d-1}(x)=0$. Among all sufficiently smooth domains $\Omega$ having the same volume as the standard unit ball $B_1$, the one with the largest first non-zero Steklov eigenvalue $\sigma_1$ is the ball $B_1$. This result is known as the Brock-Weinstock inequality. Proved in dimension $2$ by Weinstock \cite{weinstock1954inequalities} and generalized to any dimension by Brock \cite{brock2001isoperimetric}, it can be written as
\[
\sigma_1(\Omega)\leq 1=\sigma_1(B_1),
\]
for all domains $\Omega$ with Lipschitz boundary and such that $|\Omega|=|B_1|$.
Furthermore, equality is attained if, and only if $\Omega$ is a ball of radius $1$. The stability of the Brock-Weinstock inequality has been adressed by Brasco, De Philipis and Ruffini in \cite{BrascoDePhilippisRuffini}, where they proved the following.\newline\newline
{\textbf{Theorem.} (\cite{BrascoDePhilippisRuffini} Brasco-De Philippis-Ruffini, 2012)\\
There is a constant $C_d$ depending only on the dimension $d$, such that for any open bounded Lipschitz set $\Omega$ normalized such that $|\Omega|=|B_1|$,
\[
1- \sigma_1(\Omega)\geq C_d\,\alpha(\Omega)^2,
\]
where $\alpha$ denotes the Fraenkel asymmetry.
}

\subsection{Stein's method}

Stein's method is a set of techniques introduced by Stein \cite{stein1972bound} to give a quantitative convergence rate in the central limit theorem, without any use of the Fourier transform. Here we briefly present the use of the method in stability problems, and refer the reader to Section \ref{sec:fundamStein} for a more technical introduction. The main idea of Stein's method is that we can control the distance between a probability distribution and the standard normal distribution by means of a deficit in an integration-by-parts formula. Noting that in the right cases, the Euler-Lagrange equation of a variational problem characterizes the extremizers, Courtade and Fathi \cite{courtadefathi20} pointed out that the almost extremizers should almost satisfy the Euler-Lagrange equation, and so the stability problem of showing that the deficit in the Euler-Lagrange equation controls some appropriate notion of distance to the set of extremizers coincides exactly with the framework of Stein's method. This line of thought was original used to derive stability results for the Poincaré constant and the logarithmic Sobolev constant of uniformly log-concave probability distributions \cite{courtadefathi20}, and then was followed by stability results for the Poincaré constant of a probability distribution under moment constraints in $\R^d$ \cite{CFP}, in the general setting of a Markov diffusion triple \cite{jordan2023stability}, and under a curvature-dimension condition \cite{FGS}. We can also mention the stability results for the Poincaré-Korn inequality \cite{CFPoincareKorn}, for Klartag's improved Lichnerowicz inequality \cite{CFimprovedLichn}, or for the eigenvalues of any order of a one-dimensional diffusion \cite{serresSPA}. Due to technical problems with boundary terms, this method had never been implemented before this work for functional and geometric inequalities involving shapes instead of probability distributions.

\subsection{Main results}\label{sec:mainresults}

The goal of this paper is to develop Stein's method for shapes. More precisely, we adapt the usual Stein's method in the case where probability distributions supported on the whole space $\R^d$ are replaced by bounded domains $\Omega\subset\R^d$. The main technical challenge is the presence of a boundary, and the fact that there is no standard method for extending solutions with Neumann conditions from one general domain $\Omega$ to another. We address this difficulty by replacing the Neumann condition with a more general condition known in the PDE literature as the oblique boundary condition, see Problem \eqref{eq:steinshape} for a precise statement. Intuitively, the idea is as follows. Suppose we want to compare two domains $\Omega_1$ and $\Omega_2$. Firstly, we expect the set of functions on $\Omega_1$ satisfying the Neumann boundary condition to characterize in some way the geometry of the domain $\Omega_1$, since by a classical probabilistic representation formula, these functions are related to the Brownian motion on $\Omega_1$ reflected orthogonally on the boundary. Second, we transport the unit normal vector from $\partial \Omega_2$ to the boundary of $\Omega_1$, and solve the oblique PDE problem where this new boundary vector replaces the normal vector of $\Omega_1$. We expect the set of all functions on $\Omega_1$ satisfying this new oblique condition to somehow represent Brownian motion on $\Omega_2$ reflected orthogonally on the boundary. The idea, then, is to use this set of functions as a means of quantifying the difference between the two domains.  In what follows, we focus on the case where $\Omega_1$ is the standard unit ball because we are interested in applications to isoperimetric problems, but we expect this strategy to work in broader cases, such as anisotropic perimeters.

Note that Stein's method is a new method for studying quantitative geometric inequalities. The four main methods for proving the stability of isoperimetric problems are the symmetrization method \cite{FMP08}, the transport mass approach \cite{figalli2010mass}, the selection principle \cite{cicalese2012selection} and the ABP method \cite{cabre2016sharp, cinti2022sharp}.
Note also that, while Stein's method requires more restrictive regularity assumptions, it makes all constants explicit, unlike the selection principle method, which proceeds via a contradiction argument.

Our first result is that in the case of a star-shaped domain $\Omega$ with a Hölder continuous boundary, this strategy works and allows us to show the following.
\begin{thm}\label{thm:mainthm}
Let $\Omega$ be a $\St^{2,\alpha}_{\kappa,\Lambda}$-domain and let $\nu$ denote its unit outward pointing normal vector. Then for some constant $C_{d,\alpha,\kappa,\Lambda}>0$ depending only on the dimension $d$ and the regularity parameters $\alpha, \kappa$ and $\Lambda$, the $\alpha$-Zolotarev distance between $\Omega$ and the ball is bounded as
\[
Z^\alpha(\Omega,B_1) \leq C_{d,\alpha,\kappa,\Lambda}\,\int_{\partial\Omega} \left| \frac{x}{|x|} - \nu \right|.
\]
\end{thm}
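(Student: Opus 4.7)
By definition, the $\alpha$-Zolotarev distance between $\Omega$ and $B_1$ is the supremum, over an appropriate class $\mathcal{F}_\alpha$ of $\alpha$-Hölder test functions $h$, of $\bigl|\tfrac{1}{|\Omega|}\int_\Omega h\,dx - \tfrac{1}{|B_1|}\int_{B_1} h\,dx\bigr|$. The plan is to bound this difference uniformly in $h\in\mathcal{F}_\alpha$ by the boundary integral on the right-hand side of the theorem, for each fixed $h$.

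For each such $h$, I would introduce the oblique Stein equation on the unit ball anticipated in the introduction: find $u:\bar B_1\to\R$ satisfying
\[
\Delta u = h - c(h)\ \text{in}\ B_1,\qquad \nabla u\cdot\tilde\nu = 0\ \text{on}\ \partial B_1,
\]
where $\tilde\nu(\theta) := \nu(\rho(\theta)\theta)$ is the outer normal to $\partial\Omega$ transported to $\partial B_1 = \mathbb{S}^{d-1}$ through the radial parametrization $x = \rho(\theta)\theta$ of $\partial\Omega$, and $c(h)$ is the unique scalar making the Fredholm compatibility condition of this oblique BVP hold. The motivating heuristic---that the $\tilde\nu$-obliquely reflected Brownian motion on $B_1$ mimics the normally reflected Brownian motion on $\Omega$, whose invariant measure is uniform on $\Omega$---strongly suggests $c(h) = \bar h_\Omega := \tfrac{1}{|\Omega|}\int_\Omega h\,dx$. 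Star-shapedness ensures $\tilde\nu\cdot\theta > 0$, and the assumption $\Omega\in\St^{2,\alpha}_{\kappa,\Lambda}$ provides the $C^{1,\alpha}$ control on $\tilde\nu$ needed for the Schauder theory of oblique BVPs, which should produce a unique $u\in C^{2,\alpha}(\bar B_1)$ modulo constants together with a uniform estimate
\[
\|\nabla u\|_{L^\infty(\partial B_1)} \leq C_{d,\alpha,\kappa,\Lambda}\,\|h\|_{C^\alpha(\bar B_1)}.
\]

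The Stein identity then closes the proof by integration by parts. Since the outward normal to $\partial B_1$ at $x$ is $x$ itself and $\nabla u\cdot\tilde\nu = 0$ there,
\[
|B_1|\,(\bar h_{B_1}-\bar h_\Omega) = \int_{B_1}\Delta u\,dx = \int_{\partial B_1}\nabla u\cdot x\,d\mathcal{H}^{d-1} = \int_{\partial B_1}\nabla u\cdot(x-\tilde\nu)\,d\mathcal{H}^{d-1}.
\]
Pulling this $\partial B_1$-integral back to $\partial\Omega$ through the radial map---whose surface Jacobian is bounded above and below by constants depending only on $\kappa,\Lambda$---and applying the uniform Schauder bound gives $|\bar h_{B_1}-\bar h_\Omega| \leq C_{d,\alpha,\kappa,\Lambda}\int_{\partial\Omega}\bigl|\tfrac{x}{|x|}-\nu(x)\bigr|\,d\mathcal{H}^{d-1}$, and the theorem follows by taking the supremum over $h\in\mathcal{F}_\alpha$. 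The hardest step will be the PDE ingredient: establishing well-posedness modulo constants and a uniform $C^{2,\alpha}$ Schauder estimate for the oblique problem when the vector field $\tilde\nu$ has only $C^{1,\alpha}$ regularity, and crucially identifying the compatibility constant as $\bar h_\Omega$; once this PDE machinery is in place, the divergence theorem and change of variables between $\partial B_1$ and $\partial\Omega$ are routine.
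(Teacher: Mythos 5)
Your proof follows essentially the same route as the paper's (Section \ref{sec:geomsetting} together with the first bullet of the remark after \eqref{eq:mainStein}): solve the oblique Stein equation \eqref{eq:steinshape} on $B_1$ with the transported normal $\nu_\Omega$ as boundary vector field, invoke the oblique Schauder estimate to control $\nabla f$, integrate $\Delta f$ over $B_1$, use $\nabla f\cdot\nu_\Omega=0$ to rewrite the resulting boundary integral in terms of $\theta-\nu_\Omega(\theta)$, and change variables back to $\partial\Omega$. You rightly single out the identification of the Fredholm compatibility constant with $\frac{1}{|\Omega|}\int_\Omega h$ as the delicate PDE ingredient; the paper asserts solvability of \eqref{eq:steinshape} with exactly this centering (citing Gilbarg--Trudinger) without a detailed verification, so on that point you and the paper stand on the same footing.
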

\noindent
The parameter $\kappa$ characterizes the fact that $\Omega$ is uniformly star-shaped, and the parameters $\alpha\in (0,1]$ and $\Lambda$ characterize the regularity of the boundary, see Definition $\ref{def:regulardomain}$ for details. The $\alpha$-Zolotarev distance is a dual distance between the uniform probability distributions on $B_1$ and on $\Omega$, which interpolates between the total variation distance for $\alpha=0$ and the $1$-Wasserstein distance for $\alpha=1$, see Equation $\eqref{def:holderdistanceshapes}$ for the precise definition. Note that the star-shaped assumption is needed in order for the oblique boundary PDE to actually make sense, see Section \ref{sec:geomsetting}. Note also the similarity between the quantity $\int_{\partial\Omega} | x/|x| - \nu|$ and the oscillation index given in \eqref{eq:oscillationindex}. Actually, this quantity represents a type of barycentric $L^1$-oscillation index, i.e. an $L^1$-oscillation index which only looks at the ball centered at the barycenter of $\Omega$. In other words, Theorem \ref{thm:mainthm} gives that the oscillation index \eqref{eq:oscillationindex} controls the $\alpha$-Zolotarev distances. Therefore, in combination with Fusco-Julin's classical results \cite{fuscojulin14}, Theorem \ref{thm:mainthm} gives a quantitative isoperimetric inequality with respect to the $\alpha$-Zolotarev distances. Our second result is a kernel reformulation of Stein's method for shapes. We say that a matrix-valued function $\tau_\Omega: \Omega \to \mathcal{M}_d(\R)$ is a Stein kernel for $\Omega$ (see Definition \ref{def:steinkernel}), when the following divergence-like formula
\[
\int_{\Omega} \langle \tau_\Omega, Du \rangle_{HS} = \int_{\partial\Omega} x\cdot u(x),
\]
holds for all $\C^1$ vector-valued functions $u:\bar{\Omega}\to \R^d$, where $\langle , \rangle_{HS}$ denotes the Hilbert-Schmidt inner product for matrices. It is easy to see from the divergence theorem that the constant matrix-valued function $\tau_{B_1}=I_d$, equals to the identity matrix, is a Stein kernel for the standard unit ball $B_1$. Our result is that in the case of a star-shaped domain $\Omega$ with a Hölder continuous boundary, the $L^1$ distance between $I_d$ and any Stein kernel of $\Omega$ controls the $\alpha$-Zolotarev distance to the ball.
\begin{thm}\label{thm:mainthmkernel}
Let $\Omega$ be a $\St^{2,\alpha}_{\kappa,\Lambda}$-domain. Assume that it admits a Stein kernel $\tau_\Omega : \bar{\Omega}\to \R^d$. If $\Lambda$ is small enough, then it exists a constant $C_{d,\alpha,\Lambda}>0$ depending only on the dimension $d$ and the regularity parameters $\alpha$ and $\Lambda$, such that
\[
Z^\alpha(\Omega,B_1) \leq C_{d,\alpha,\Lambda}\,\int_{\Omega} ||I_d-\tau_\Omega||_{HS}.
\]
\end{thm}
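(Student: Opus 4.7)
The plan is to rerun the Stein scheme of Theorem~\ref{thm:mainthm}, but replace the boundary oscillation bound by a bound through the Stein kernel. Start from the dual characterization of $Z^\alpha(\Omega,B_1)$ as a supremum over a class of $\alpha$-Hölder test functions, and normalize so that $|\Omega|=|B_1|$. For each such test function $\phi$, the idea is to produce a good scalar solution $v$ on $\bar{\Omega}$ and then use the defining identity
\[
\int_{\Omega}\langle \tau_\Omega, Du\rangle_{HS} = \int_{\partial\Omega} x\cdot u,
\]
applied to $u=\nabla v$, as a bridge between the interior Stein kernel deficit and the boundary data attached to~$\phi$.

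Concretely, I would solve the oblique boundary problem \eqref{eq:steinshape} for the source $\phi-\bar{\phi}$ on the ball $B_1$ (with the boundary vector field on $\partial B_1$ obtained by transporting the outer normal of $\partial\Omega$), and extend the resulting scalar solution $v$ to a neighborhood of $\bar{\Omega}$ while preserving its $C^{2,\alpha}$ regularity. Schauder estimates for oblique problems (as in Gilbarg--Trudinger, Ch.~6) should then yield $\|\nabla^2 v\|_\infty \leq C_{d,\alpha,\Lambda}|\phi|_{C^{0,\alpha}}$, where the hypothesis that $\Lambda$ is small ensures that the transported boundary vector remains uniformly non-tangential on $\partial B_1$ and that the oblique Schauder constants are uniform across the admissible class of domains~$\Omega$.

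Plugging $u=\nabla v$ into the Stein kernel identity gives $\int_{\Omega}\langle \tau_\Omega, \nabla^2 v\rangle_{HS}=\int_{\partial\Omega} x\cdot\nabla v$. Combining this with the PDE $\Delta v = \phi-\bar{\phi}$ on $B_1$ (so that, after using $|\Omega|=|B_1|$, $\int_{\Omega}\Delta v = \int_{\Omega}\phi-\int_{B_1}\phi$), and invoking the transported oblique boundary condition on $\partial B_1$ to absorb the contribution from $\int_{\partial\Omega}x\cdot \nabla v$, one should obtain the clean identity
\[
\int_{\Omega}\phi - \int_{B_1}\phi = \int_{\Omega}\langle I_d - \tau_\Omega,\nabla^2 v\rangle_{HS}.
\]
Then, by the Cauchy--Schwarz bound $|\langle I_d-\tau_\Omega,\nabla^2 v\rangle_{HS}|\leq \|\nabla^2 v\|_\infty\|I_d-\tau_\Omega\|_{HS}$ together with the Schauder estimate above,
\[
\left|\int_{\Omega}\phi - \int_{B_1}\phi\right| \leq C_{d,\alpha,\Lambda}\,|\phi|_{C^{0,\alpha}}\int_{\Omega}\|I_d-\tau_\Omega\|_{HS},
\]
and taking the supremum over $\phi$ finishes the proof.

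The main obstacle will be the cancellation of the boundary contribution $\int_{\partial\Omega}x\cdot\nabla v$ in the displayed identity. This is precisely the point where the oblique boundary condition from \eqref{eq:steinshape} does its job: the boundary vector field on $\partial B_1$ is engineered so that, once the PDE on $B_1$ is pushed to $\Omega$, the boundary contribution vanishes at leading order and the first-order error can be absorbed into the Stein kernel deficit. The smallness of $\Lambda$ is then used twice, to obtain the perturbative Schauder estimate for the oblique Laplacian with constants depending only on $d$, $\alpha$, $\Lambda$ (and no longer on $\kappa$), and to control the remainder produced by transporting $v$ from $B_1$ to $\Omega$.
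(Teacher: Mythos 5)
Your proposal tracks the general spirit of Stein's method, but it departs from the paper's argument in a way that opens a genuine gap. The paper does \emph{not} reuse the Laplacian problem \eqref{eq:steinshape} from Theorem \ref{thm:mainthm}; it introduces a different oblique problem \eqref{eq:ksteinshape} on $B_1$, whose interior operator is the variable-coefficient expression $J(\theta)\,\langle I_d, \nabla^2 f(\theta)(D\psi)^{-1}(\theta)\rangle_{HS}$ (not $\Delta f$), and whose boundary condition is $\nabla f\cdot\psi(\theta)=0$ on $\mathbb{S}^{d-1}$ (not $\nabla f\cdot\nu_\Omega=0$). These changes are not cosmetic: they are exactly what makes a change of variables match the Stein kernel identity.

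The concrete problem with your ``clean identity''
\[
\int_{\Omega}\phi - \int_{B_1}\phi = \int_{\Omega}\langle I_d - \tau_\Omega,\nabla^2 v\rangle_{HS}
\]
is that, expanding the right side, $\int_{\Omega}\langle I_d - \tau_\Omega, \nabla^2 v\rangle_{HS} = \int_{\Omega}\Delta v - \int_{\partial\Omega}x\cdot\nabla v$. You only know $\Delta v = \phi - \bar\phi$ \emph{on $B_1$}; an extension of $v$ to $\Omega$ (however regular) will not satisfy the PDE on $\Omega\setminus B_1$, so $\int_{\Omega}\Delta v$ is not what you need. Likewise the boundary condition $\nabla v\cdot\nu_\Omega = 0$ lives on $\partial B_1$, not on $\partial\Omega$, so it does not kill $\int_{\partial\Omega}x\cdot\nabla v$. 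Your closing remark that the boundary term ``vanishes at leading order and the first-order error can be absorbed'' is not a mechanism but the missing step of the proof.

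The paper avoids both problems at once. One applies the Stein kernel identity to $u(x):=\nabla f(\psi^{-1}(x))$ (the gradient of $f$ transported by $\psi^{-1}$, not the gradient of $f\circ\psi^{-1}$), so that $Du(x)=\nabla^2 f(\psi^{-1}(x))\,D(\psi^{-1})(x)$. The interior operator in \eqref{eq:ksteinshape} is then precisely the pull-back of $\langle I_d, Du\rangle_{HS}$ through the change of variables $x=\psi(\theta)$, $dx = J\,d\theta$, so that integrating the PDE over $B_1$ produces exactly $\int_\Omega \langle I_d, Du\rangle_{HS}$. The boundary condition $\nabla f\cdot\psi=0$ on $\mathbb{S}^{d-1}$ is the exact pull-back of $u(x)\cdot x=0$ on $\partial\Omega$, so the boundary term in the Stein kernel identity vanishes identically, with no perturbative argument. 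Finally, the smallness of $\Lambda$ is used not to make a ``perturbative Schauder estimate for the oblique Laplacian'', but to guarantee that the symmetric part of $D\psi$ is positive-definite, i.e.\ that the interior operator of \eqref{eq:ksteinshape} is uniformly elliptic, which is the hypothesis needed for the Schauder estimate \eqref{eq:kernelobliqueschauder} with constants depending only on $d,\alpha,\Lambda$.
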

\noindent
Again, the parameters $\alpha$ and $\Lambda$ characterize the regularity of the boundary, see Definition $\ref{def:regulardomain}$ for details, and note that contrary to Theorem \ref{thm:mainthm}, the dependence of the constant in the parameter $\kappa$ on the uniform star-shaped condition has been removed. As an application, we get our third result, which is a quantitative form for the Brock-Weinstock inequality.
\begin{thm}\label{thm:mainstabBW}
If $\Lambda$ is small enough, then it exists a constant $C$ depending only on the dimension $d$ and the regularity parameters $\alpha$ and $\Lambda$, such that for any $\St^{2,\alpha}_{\kappa,\Lambda}$-domain $\Omega$ normalized such that $|\Omega|=|B_1|$,
\[
1 - \sigma_1(\Omega) \geq  C\,\frac{\sigma_1(\Omega)}{d|\Omega|} Z^\alpha(\Omega,B_1)^2
\]
where $\sigma_1$ is the first non-zero Steklov eigenvalue.
\end{thm}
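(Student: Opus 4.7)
The plan is to construct a Stein kernel for $\Omega$ using the oblique PDE central to this paper, estimate its $L^2$ distance to $I_d$ in terms of the Brock--Weinstock deficit, and then apply Theorem~\ref{thm:mainthmkernel} to convert this into a $Z^\alpha$ bound.

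After translating $\Omega$ so that $\int_{\partial\Omega} x_j\, d\mathcal{H}^{d-1} = 0$ for every $j = 1, \dots, d$ (which supplies the compatibility condition below), I would solve for each $j$ the scalar oblique problem
\[
\Delta v_j = 0 \text{ in } \Omega, \qquad \nabla v_j \cdot \nu = x_j \text{ on } \partial \Omega,
\]
normalized by $\int_{\partial \Omega} v_j\, d\mathcal{H}^{d-1} = 0$. A direct integration by parts shows that $\tau_\Omega := Dv$ satisfies Definition~\ref{def:steinkernel}, and on the unit ball $v_j = x_j$ recovers $\tau_{B_1} = I_d$.

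Writing $w := v - x$, so that $\tau_\Omega - I_d = Dw$, the key estimate is extracted from the Steklov variational principle applied to $v_j$ (admissible since $\int_{\partial\Omega} v_j = 0$):
\[
\sigma_1(\Omega)\,\int_{\partial\Omega} v_j^2\, d\mathcal{H}^{d-1} \leq \int_\Omega |\nabla v_j|^2\, dx = \int_{\partial\Omega} v_j\, x_j\, d\mathcal{H}^{d-1},
\]
the last equality using $\Delta v_j = 0$ and the oblique boundary condition. Cauchy--Schwarz on the right yields $\int_\Omega |\nabla v_j|^2 \leq \sigma_1^{-1}\int_{\partial\Omega} x_j^2$; summing over $j$, expanding $\|Dv\|_{HS}^2 = \|I_d + Dw\|_{HS}^2$, and using the identity $\int_\Omega \mathrm{tr}(Dw) = \int_{\partial\Omega} |x|^2\, d\mathcal{H}^{d-1} - d|\Omega|$ (divergence theorem combined with the PDE) together with the Betta--Brock--Mercaldo--Posteraro inequality $\int_{\partial\Omega}|x|^2 \geq d|\Omega|$ produces, after a short algebraic manipulation,
\[
\int_\Omega \|\tau_\Omega - I_d\|_{HS}^2\, dx \leq \frac{d|\Omega|\bigl(1-\sigma_1(\Omega)\bigr)}{\sigma_1(\Omega)}.
\]

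To conclude, I would pass from $L^2$ to $L^1$ by Cauchy--Schwarz and invoke Theorem~\ref{thm:mainthmkernel}:
\[
Z^\alpha(\Omega, B_1) \leq C_{d,\alpha,\Lambda}\,\int_\Omega \|\tau_\Omega - I_d\|_{HS}\, dx \leq C_{d,\alpha,\Lambda}\,|\Omega|\sqrt{\frac{d\bigl(1-\sigma_1(\Omega)\bigr)}{\sigma_1(\Omega)}},
\]
and squaring and rearranging gives the stated inequality (the extra $|\Omega|=|B_1|$ is absorbed into the constant, which still depends only on $d$, $\alpha$, and $\Lambda$). The main technical obstacle is ensuring enough regularity of $v$ on $\bar\Omega$ for $Dv$ to qualify as a Stein kernel in the sense required by Theorem~\ref{thm:mainthmkernel}; this reduces to the Schauder-type estimates for the oblique problem on $\St^{2,\alpha}_{\kappa,\Lambda}$-domains that are already developed in the course of proving Theorem~\ref{thm:mainthm}. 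The regime $\sigma_1(\Omega) < 1/2$, where the algebraic manipulation above would lose the correct sign, should be treated separately: there $1 - \sigma_1 \geq 1/2$ is bounded below by an absolute constant and the conclusion follows from a uniform bound on $Z^\alpha(\Omega, B_1)$ over the class of $\St^{2,\alpha}_{\kappa,\Lambda}$-domains with $|\Omega|=|B_1|$.
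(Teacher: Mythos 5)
Your proposal is correct and follows essentially the same strategy as the paper's proof of Theorem~\ref{thm:stabBW}: construct a Stein kernel, bound its $L^2$-distance to $I_d$ by the Brock--Weinstock deficit, then invoke Theorem~\ref{thm:mainthmkernel} via Cauchy--Schwarz. The paper obtains the Stein kernel $Dg$ abstractly, as the Riesz representative of the linear functional $\Phi(u)=\int_{\partial\Omega}x\cdot u$ on the Hilbert space $E$ of centered $H^1$ vector fields, with the operator-norm bound $\int_\Omega\|Dg\|_{HS}^2\le C_{BW}(\Omega)\int_{\partial\Omega}|x|^2$ coming from the Sobolev trace inequality; your explicit Neumann problem for each component $v_j$ produces the same object (the weak Euler--Lagrange system for $g$ is exactly $\Delta v_j=0$, $\partial_\nu v_j = x_j$), and your Steklov-variational-plus-Cauchy--Schwarz estimate is the same inequality written componentwise. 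One place where you are in fact sharper than the paper: passing from $d|\Omega|+(C_{BW}-2)\int_{\partial\Omega}|x|^2$ to $(C_{BW}-1)\,d|\Omega|$ using $\int_{\partial\Omega}|x|^2\ge d|\Omega|$ only works when $C_{BW}(\Omega)\le 2$, i.e.\ $\sigma_1\ge 1/2$; the paper's proof of Theorem~\ref{thm:stabBW} does not flag this, whereas you correctly isolate the regime $\sigma_1<1/2$ and observe it can be dispatched by the crude uniform bound $Z^\alpha(\Omega,B_1)\le 2|B_1|$. (In the setting of Theorem~\ref{thm:mainstabBW} with $\Lambda$ small this regime is vacuous, since $\sigma_1$ is then near $1$, but it is worth noting.) Minor quibble: the boundary problem you solve is Neumann rather than oblique in the strict sense, and the regularity you need for $Dv$ to qualify as a Stein kernel is just that the integration by parts in Definition~\ref{def:steinkernel} is licit, which standard Neumann Schauder theory on a $\C^{2,\alpha}$ domain provides directly; it does not require the oblique estimates of Section~\ref{sec:obliqueSchauder}, which live on $B_1$.
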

\noindent
Note that the assumption that $\Lambda$ is small, which requires the domain $\Omega$ to be a small perturbation of the ball, is necessary to guarantee that the oblique PDE \eqref{eq:ksteinshape} is elliptic.
For a broader statement, see Theorem \ref{thm:stabBW}. Our two others applications are a quantitative isoperimetric inequality under a constraint on the first Steklov eigenvalue of the domain, and a quantitative isoperimetric inequality involving the sum of the two deficits in the weighted and unweighted isoperimetric inequalities.\newline\newline
{\textbf{Proposition.} (see Proposition \ref{prop:stabunderSteklovconstraint})\\
It exists a constant $C>0$ depending only on the dimension $d$ and the regularity parameters $\alpha, \kappa$ and $\Lambda$ of any $\St^{2,\alpha}_{\kappa,\Lambda}$-domain $\Omega$ centered as $\int_{\partial\Omega} x =0$, such that if the first non-zero Steklov eigenvalue satisfies $\sigma_1(\Omega) \geq 1 $, then 
\[
|\partial\Omega| \geq |\partial B_r| + C\,Z^\alpha(\Omega,B_1)^2,
\]
where $B_r$ is a ball of the same volume as $\Omega$.\newline\newline
}
{\textbf{Proposition.} (see Proposition \ref{prop:combinedstab})\\
It exists a constant $C>0$ depending only on the dimension $d$ and the regularity parameters $\alpha, \kappa$ and $\Lambda$ of any $\St^{2,\alpha}_{\kappa,\Lambda}$-domain $\Omega$ with the volume constraint $|\Omega|=|B_1|$, such that
\[
\delta(\Omega) + \delta_{|x|^2}(\Omega) \geq C\,Z^\alpha(\Omega,B_1)^2,
\]
where $\delta(\Omega) = |\partial \Omega | - |\partial B_1|$ denotes the usual isoperimetric deficit, and $\delta_{|x|^2}(\Omega) = \int_{\partial\Omega} |x|^2 - |\partial B_1| $ denotes the isoperimetric deficit with the weight function $|x|^2$.\newline\newline
}
It should be noted that these results appear to be new, even if they are no stronger than the quantitative isoperimetric inequalities already known from other proof methods.\newline\newline
We conclude this section by some notations. In all the sequel, we denote by $B_1$ the standard unit ball, i.e. the ball with center $0$ and radius $1$. To lighten the notation, we will omit to write the Hausdorff measure on integral on boundary, and therefore we make the convention that an integral on a boundary without any precision is always with respect to the $(d-1)$-dimensional Hausdorff measure. 

\section{Stein's method for shapes}\label{sec:Stein}

\subsection{Fundamentals of Stein's method}\label{sec:fundamStein}

Let us start by recalling the basic idea behind Stein's method, which comes from the field of mathematical statistics. For a more complete introduction, we refer the reader to the famous survey \cite{rosssurvey}, from which we borrow the title of this section. In \cite{stein1972bound}, Stein noted that if a random variable $W$ satisfies the following integration by parts formula for all sufficiently smooth functions $f:\R\rightarrow\R$,  $$\E\left[f'(W)-Wf(W)\right]=0,$$ then $W$ is distributed as the standard normal distribution $\gamma$, and moreover its distance to the normal is bounded in the following way
\begin{equation}\label{eq:Steinslemma}
W_1\left(\mathcal{L}(W),\gamma\right) \leq \underset{\underset{|f'|\leq \sqrt{2/\pi}}{|f|,|f''|\leq 2}}{\sup}\left|\E\left[f'(W)-Wf(W)\right]\right|
\end{equation}
where $W_1$ stands for the $L^1$-Wasserstein distance and $\mathcal{L}(W)$ denotes the distribution of $W$. What is now commonly referred to as Stein's method is the sum total of all the techniques developed to bound the supremum in \eqref{eq:Steinslemma}. The proof of \eqref{eq:Steinslemma} is based on the dual Kantorovich formulation for $W_1$,
\begin{equation}\label{eq:W1}
W_1\left(\mu,\gamma\right) = \underset{|h'|\leq 1}{\sup} \left|\int h\,d\mu - \int h\,d\gamma\right| 
\end{equation}
and on the fact that for all Lipschitz $h$, the ODE 
\begin{equation}\label{eq:Steineq}
f'(x)-xf(x) = h(x)-\int h\,d\gamma,\quad x\in\R 
\end{equation}
admits a solution $f$ satisfying $|f|,|f''|\leq 2|h'|_\infty$ and $|f'|\leq \sqrt{2/\pi}|h'|_\infty$. Hence, one can write
\[
W_1\left(\mu,\gamma\right) = \underset{|h'|\leq 1}{\sup} \left|\int h\,d\mu - \int h\,d\gamma\right|= \underset{f\,\mathrm{sol\, of} \eqref{eq:Steineq}}{\sup}\left| \int f'-xf\,d\mu\right| \leq \underset{\underset{|f'|\leq \sqrt{2/\pi}}{|f|,|f''|\leq 2}}{\sup} \left| \int f'-xf\,d\mu\right|
\]
proving Stein's inequality \eqref{eq:Steinslemma}. Note that this method can easily be extended to higher dimensions, and that it also admits an equivalent formulation by mean of the so-called Stein kernels, see e.g. \cite{nourdinpeccati}. A matrix-valued map $\tau:\R^d\to\R^{d\times d}$ is said to be a Stein kernel for the random variable $W\in\R^d$ if for all smooth enough vector-valued functions $f:\R^d\to\R^d$,
\[
\E\left[ \langle \tau(W), \Jac f \rangle_{HS} \right] = \E\left[ W\cdot f(W)\right],
\]
where $\langle,\rangle_{HS}$ denotes the Hilbert-Schmidt inner product of matrices. It is easy to see that if $W$ is distributed as a standard normal distribution, then the constant function equals to the identity matrix $\tau = I_d$ is a Stein kernel. Then, up to a multiplicative constant, the supremum in Inequality \eqref{eq:Steineq} can be reformulated as
\begin{equation}\label{eq:steindiscrepancyclassic}
\underset{\tau}{\inf}\, \E\, ||I_d-\tau(W)||_{HS}
\end{equation}
where the infimum runs over all Stein kernels for $W$. This quantity is known as the Stein discrependacy, and gives an upper bound for more classical distances between the distribution of $X$ and the Gaussian, such as $L^p$-Wasserstein distances. After adapting Stein's method for shapes in the next section, we will also give a kernel formulation in Section \ref{sec:steinkernel}. 

\subsection{Stein's method for shapes}\label{sec:geomsetting}

Our aim is to import Stein's method into a geometric context where probability distributions are replaced by shapes. For this purpose, let $\alpha\in(0,1]$, let $\Omega\subset\R^d$ be a $\C^{2,\alpha}$ domain, and let $\nu : \partial\Omega \to \mathbb{S}^{d-1}$ be the inner outward pointing normal of $\Omega$. We assume that $\Omega$ is star-shaped with respect to $0$, so one can find some function $R:\mathbb{S}^{d-1}\to (0,\infty)$ such that 
\[
\Omega = \{0\} \cup \left\{x\in\R^d\setminus \{0\}\,:\, 0<|x|<R\left(\frac{x}{|x|}\right) \right\}.
\]
Note that the inversion $x\mapsto x/|x|$ is a $\C^{2,\alpha}$ one-to-one correspondence between $\partial\Omega$ and $\mathbb{S}^{d-1}$ whose inverse is given by $\theta\in\mathbb{S}^{d-1}\mapsto  R(\theta)\theta\in\partial\Omega $. We can therefore define 
\begin{equation}\label{def:nuOmega}
\nu_\Omega : \mathbb{S}^{d-1}\to \mathbb{S}^{d-1},\quad \nu_\Omega(\theta) := \nu(R(\theta)\theta)
\end{equation}
which corresponds to the normal $\nu$ of $\partial\Omega$ transported on $\mathbb{S}^{d-1}$.
\begin{defi}\label{def:regulardomain}
We will say that a star-shaped domain $\Omega$ is in the regularity class $\St^{2,\alpha}_{\kappa,\Lambda}$, or is a $\St^{2,\alpha}_{\kappa,\Lambda}$-domain, if it satisfies the following
\begin{itemize}
\item it is a $\C^{2,\alpha}$ domain for $\alpha\in(0,1]$,
\item it is $\kappa$-uniformly star-shaped with respect to $0$ for some $\kappa\in (0,1]$, i.e.
\[
\forall x\in\partial\Omega,\quad \nu\cdot \frac{x}{|x|} \ge \kappa.
\]
\item the radius function $R$ is sufficiently close to $1$, i.e. for some $\Lambda>0$, it holds
\[
||R-1||_{\C^{1,\alpha}(\mathbb{S}^{d-1})} \leq \Lambda.
\]
\end{itemize}
\end{defi}
\noindent
Let us then define 
\begin{equation}\label{def:psidiffeo}
\psi : B_1 \to \Omega,\quad \psi(x)= R\left(\frac{x}{|x|}\right)x
\end{equation}
which is a $\C^1$-diffeomorphism whose inverse is given by $\psi^{-1}(x) = \frac{1}{R(x/|x|)}x$. 
We will denote by $J$ its Jacobian $J=|\det (\nabla \psi)|$. Note that $\psi$ restricted to $\mathbb{S}^{d-1}$ induces a diffeomorphism from $\mathbb{S}^{d-1}$ onto $\partial\Omega$ which is equal to the one mentioned above. Note also that it follows from the third point of Definition \ref{def:regulardomain} that $||J||_{\C^\alpha(B_1)}\leq \Lambda$.
Let then take a function $h\in C^\alpha(\R^d)$, and consider the following problem on the standard unit ball $B_1$
\begin{equation}\label{eq:steinshape}
\left\{
    \begin{array}{ll}
        \Delta f(x) = h(x)-\frac{1}{|\Omega|}\int_{\Omega}h, & x\in B_1 \\
        \nabla f(\theta) \cdot \nu_\Omega(\theta) = 0, & \theta\in \partial B_1
    \end{array}
\right.
\end{equation}
Equation \eqref{eq:steinshape} is a very particular instantiation of the more general theory of oblique PDEs, for which we refer the reader to the book \cite{lieberman2013oblique}. Note that the centering of $h$ is necessary since for $\Omega=B_1$ the problem boils down to the Neumann boundary problem on the ball.
The elliptic regularity theory ensures that \eqref{eq:steinshape} admits a solution $f\in \C^{2,\alpha}(B_1)$, and moreover, the following Schauder estimate holds for some constant $C_{d,\alpha,\kappa,\Lambda}>0$ depending only on the dimension $d$ and on the regularity parameters $\alpha$, $\kappa$ and $\Lambda$, (see Section \ref{sec:obliqueSchauder})
\begin{equation}\label{eq:schauderoblique}
||\nabla^2 f||_{\C^\alpha(B_1)} \leq C_{d,\alpha,\kappa,\Lambda}\, ||h||_{\C^\alpha(B_1)}
\end{equation}
Defining the set 
\begin{equation}\label{def:Halpha}
\mathcal{H}^\alpha=\left\{h\in \C^\alpha(\R^d),\,||h||_{\C^\alpha(\R^d)}\le 1\right\}
\end{equation}
we can then write that
\begin{align*}
\underset{h\in\mathcal{H}^\alpha}{\sup} \left|\int_{B_1} h\,dx - \frac{|B_1|}{|\Omega|}\int_{\Omega} h\,dx\right| &= \underset{f\,\mathrm{sol\, of} \eqref{eq:steinshape}}{\sup}\left| \int_{B_1}\Delta f\,dx\right|\\
& = \underset{f\,\mathrm{sol\, of} \eqref{eq:steinshape}}{\sup}\left| \int_{\mathbb{S}^{d-1}}\nabla f \cdot \theta\,d\theta\right|\\
& = \underset{f\,\mathrm{sol\, of} \eqref{eq:steinshape}}{\sup}\left| \int_{\mathbb{S}^{d-1}}J^{-1}\nabla f \cdot (\theta-R(\theta)\nu_\Omega(\theta))\,J\right|\\
&\leq  \underset{f\,\mathrm{sol\, of} \eqref{eq:steinshape}}{\sup} ||J^{-1}||_{\C^\alpha} ||\nabla^2 f||_{\C^\alpha(B_1)} \int_{\mathbb{S}^{d-1}} |\theta-R(\theta)\nu_\Omega(\theta)|\,J\,d\theta\\
&\leq C_{d,\alpha,\kappa,\Lambda}\,  \underset{h\in\mathcal{H}^\alpha}{\sup}||h||_{\C^\alpha(B_1)} \int_{\mathbb{S}^{d-1}} |\theta-R(\theta)\nu_\Omega(\theta)|\,J\,d\theta\\
&= \tilde{C}_{d,\alpha,\kappa,\Lambda}\, \int_{\mathbb{S}^{d-1}} |\theta-R(\theta)\nu_\Omega(\theta)|\,J\,d\theta.
\end{align*}
Therefore we have proved the following inequality, 
\begin{equation}\label{eq:ineqSteinshape}
Z^\alpha(\Omega,B_1) \leq C_{d,\alpha,\kappa,\Lambda}\,\int_{\mathbb{S}^{d-1}} |\theta-R(\theta)\nu_\Omega(\theta)|\,J\,d\theta
\end{equation}
for some constant $C_{d,\alpha,\kappa,\Lambda}$ depending only on $d,\alpha,\kappa$, and $\Lambda$, and where $Z^\alpha$ stands for the $\alpha$-Zolotarev distance, $\alpha\in (0,1]$, between $\Omega$ and the standard unit ball $B_1$, defined by
\begin{equation}\label{def:holderdistanceshapes}
Z^\alpha(\Omega,B_1) = \underset{h\in\mathcal{H}^\alpha}{\sup} \left|\int_{B_1} h\,dx - \frac{|B_1|}{|\Omega|}\int_{\Omega} h\,dx\right|
\end{equation}
with $\mathcal{H}^\alpha$ as defined in \eqref{def:Halpha}.
The $\alpha$-Zolotarev distance $Z^\alpha$ has the form of an integral probability metric, which is an usual type of dual distance between probability distributions. Note that for $\alpha=0$, $Z^\alpha$ coincides with the total variation distance, which coincides itself with the Fraenkel asymmetry \eqref{eq:fraenkel}, and for $\alpha=1$, it coincides with the Kantorovich distance \eqref{eq:W1}. Let us mention that the classic $k$-Zolotarev distance, $k\in \mathbb{N}^*$, is defined as an integral probability metric where the supremum is running over all functions with $\C^k$-norm bounded by $1$, see e.g. \cite{bobkov2023zolotarev}. We conclude this section by a few remarks.
\begin{rmq}
\,
\begin{itemize}
\item Note that Theorem \ref{thm:mainthm} is proved in the same way as Inequality \eqref{eq:ineqSteinshape} by only not writing $R(\theta)$ at line $3$ in the calculation above. We have chosen to not make $R$ appear in the statement of Theorem \ref{thm:mainthm} for the ease of interpretation, but we used the presence of $R$ in the applications of Sections \ref{ssec:stabsteklovconstraint} and \ref{ssec:stabcombinedisop}.

\item Inequality \eqref{eq:ineqSteinshape} is to be compared by analogy with Stein's inequality \eqref{eq:Steinslemma}. In particular the quantity 
\[
\underset{\underset{\mathrm{on }\, \mathbb{S}^{d-1}}{\nabla f\cdot \nu_\Omega = 0}}{\sup}\left| \int_{B_1}\Delta f\,dx\right|
\]
is a divergence-like quantity for $\Omega$ with respect to the ball $B_1$, in the spirit of the supremum in Stein's original lemma \eqref{eq:Steinslemma}. Note moreover that when $\Omega=B_1$, it follows from the divergence theorem and the fact that $\nu_{B_1}=\nu_\Omega=x$ is the unit normal vector of $\partial B_1$, that 
\[
\underset{\underset{\mathrm{on }\, \mathbb{S}^{d-1}}{\nabla f\cdot \nu_\Omega = 0}}{\sup}\left| \int_{B_1}\Delta f\,dx\right| = \underset{\underset{\mathrm{on }\, \mathbb{S}^{d-1}}{\nabla f\cdot \nu_\Omega = 0}}{\sup}\left| \int_{\mathbb{S}^{d-1}}\nabla f \cdot x\right| =0, 
\]
and so both terms of \eqref{eq:ineqSteinshape} are equal to zero if, and only if, $\Omega=B_1$.

\item Note that from Inequality \eqref{eq:ineqSteinshape} and the Cauchy-Schwarz inequality, it is easy to see that, for another constant $C_{d,\alpha,\kappa,\Lambda}$, we have 
\begin{equation}\label{eq:mainStein}
Z^\alpha(\Omega,B_1)^2 \leq C_{d,\alpha,\kappa,\Lambda}\,\int_{\mathbb{S}^{d-1}} |\theta-R(\theta)\nu_\Omega(\theta)|^2\,J\,d\theta
\end{equation}
\end{itemize}
\end{rmq}

\subsection{Oblique Schauder estimates}\label{sec:obliqueSchauder}

The goal of this section is to review some results about the oblique boundary PDE given in \eqref{eq:steinshape}. Let us recall it:
\[
\left\{
    \begin{array}{ll}
        \Delta f(x) = h(x)-\frac{1}{|\Omega|}\int_{\Omega}h, & x\in B_1 \\
        \nabla f(\theta) \cdot \nu_\Omega(\theta) = 0, & \theta\in \partial B_1
    \end{array}
\right.
\]
where $\nu_\Omega$, defined by \eqref{def:nuOmega}, is the normal vector of $\partial\Omega$ transported on $\mathbb{S}^{d-1}$, and $h$ is a $\alpha$-Hölder function defined on $\R^d$. The domain $\Omega\subset\R^d$ is assumed to be $\St^{2,\alpha}_{\kappa,\Lambda}$-regular, see Definition \ref{def:regulardomain}.
Such a problem is said to be oblique when the vector field of the boundary condition, here $\nu_\Omega$, points in the same direction as the outward normal vector of the domain, here $B_1$. In our case, this means that $\forall\theta\in\mathbb{S}^{d-1} $, $\theta\cdot \nu_\Omega(\theta)>0$, which is satisfied since by assumption $\Omega$ is uniformly star-shaped.
Note, however, that we are concerned here with the case where the second-order elliptic operator is the Laplacian, which is a very particular instantiation of the more general theory of oblique elliptic PDEs, for which we refer the reader to the book \cite{lieberman2013oblique}.
Although uniqueness cannot be guaranteed for such a problem since it is a generalization of a Neumann boundary problem, solvability is always ensured through the Fredholm alternative, see \cite[Theorem 6.31]{gilbargtrudinger} and the remark that follows.
As far as Schauder estimates are concerned, the assumption on the $\St^{2,\alpha}_{\kappa,\Lambda}$-regularity of $\Omega$, i.e. the Hölder regularity of the boundary, the uniform constant in the star-shaped condition, and the control of the Hölder norm of the $R$-radius function, are exactly the requirement of \cite [Thm 6. 30]{gilbargtrudinger} for the following oblique Schauder estimate to hold for a constant $C_{d,\alpha,\kappa,\Lambda}$ depending only on the dimension $d$, and the regularity parameters $\alpha$, $\kappa$ and $\Lambda$, and for any $h\in\C^{\alpha}(B_1)$,
\begin{equation}\label{eq:gilbargtrudingeroblique}
||\nabla^2 f||_{\C^{\alpha}(B_1)} \leq C_{d,\alpha,\kappa,\Lambda} \left(||f||_{\C^{0}(B_1)} + ||h||_{\C^{\alpha}(B_1)} \right)
\end{equation}
Note that, in general, Schauder's constant depends on the domain in a more complicated way, which is why we only consider the problem on the standard unit ball.
The proof proceeds as in the case of the Dirichlet boundary condition, by first computing the harmonic Green's function when the coefficients are constant on $\R^d_+$, and deducing interior regularity estimates, second, generalizing it to variable coefficients by the technique of freezing the coefficients, and third, generalizing it to curved boundaries by mean of local straightening of the boundary to eventually get the global Schauder estimate \eqref{eq:gilbargtrudingeroblique}. 
Let us mention that the regularity assumption on the boundary $\partial\Omega\in\C^{2,\alpha}$ can be relaxed to $\partial\Omega\in\C^{1,\alpha}$, see \cite{baderko1998schauder}.
We can now show how the oblique Schauder estimate \eqref{eq:schauderoblique} used in Section \ref{sec:geomsetting} can be derived from the estimate \eqref{eq:gilbargtrudingeroblique}. We are therefore going to prove that for some constant $C_{d,\alpha,\kappa,\Lambda}$ depending only on the dimension $d$, and the regularity parameters $\alpha$, $\kappa$ and $\Lambda$, and for any $h\in\C^{\alpha}(B_1)$,
\begin{equation}\label{eq:secschauderoblique}
||\nabla^2 f||_{\C^\alpha(B_1)} \leq C_{d,\alpha,\kappa,\Lambda}\, ||h||_{\C^\alpha(B_1)}
\end{equation}
The proof can be performed from \eqref{eq:gilbargtrudingeroblique} exactly as in the case of the Neumann boundary condition (see \cite{nardi2015schauder}), but let us sketch it for completeness. Arguing by contradiction, let $f_k\in\C^{2,\alpha}(\bar{B_1})$ and $h_k\in\C^\alpha(\bar{B_1})$ be two sequences normalized as
\[
\int_{B_1} f_k =0,\quad ||f_k||_{\C^{2,\alpha}(B_1)}=1,
\]
and satisfying for all $k\in\mathbb{N}$,
\[
\left\{
    \begin{array}{ll}
        \Delta f_k = h_k-\frac{1}{|B_1|}\int_{B_1}h_k, & x\in B_1 \\
        \nabla f_k \cdot \nu_\Omega = 0, & x\in \partial B_1
    \end{array}
\right.
\]
and
\[
||f_k||_{\C^{2,\alpha}(B_1)} > k \, ||h_k||_{\C^\alpha(B_1)}.
\]
It immediately follows that $h_k\to 0$ in $\C^\alpha(\bar{B_1})$. Moreover, using the Ascoli-Arzela theorem, we can obtain a subsequence $f_{k_n}\to \tilde{f}$ in $\C^2(\bar{B_1})$, and therefore deduce that $\tilde{f}\in\C^2(\bar{B_1})$ satisfies
\[
\left\{
    \begin{array}{ll}
        \Delta \tilde{f} = 0, & x\in B_1 \\
        \nabla \tilde{f} \cdot \nu_\Omega = 0, & x\in \partial B_1 \\
        \int_{B_1} \tilde{f} = 0 &
    \end{array}
\right.
\]
so necessarily $\tilde{f}=0$, from which we get a contradiction by using \eqref{eq:gilbargtrudingeroblique} and the normalization $||f_k||_{\C^{2,\alpha}(B_1)}=1$. Note that although we have reproduced the proof by contradiction for the sake of simplicity, the other two proofs in \cite[Section 4]{nardi2015schauder} can also be reproduced in the case of the oblique condition. In particular, even if they are complicated, all the constants can be made explicit.

\subsection{The probabilistic interpretation}

Whereas the Poisson equation with Neumann boundary conditions on $B_1$ can be represented as a Brownian particle moving in $B_1$ which is orthogonaly reflected each time it touches the boundary, the oblique Equation \eqref{eq:steinshape} can be represented as a Brownian particle which is reflected by the vector field $\nu_\Omega$ on the boundary. The star-shaped condition $x\cdot\nu>0$ then guarantees that the particle remains in $B_1$. Note that very close to a point $\theta\in\mathbb{S}^{d-1}$, the oblique particle behaves as if it were a Brownian with an orthogonal reflection inside the dilation $\Omega_{1/R(\theta)}=\{x\in\R^d\,;\, d(x,\Omega)\leq 1/R(\theta)\}$ whose boundary passes through $\theta$.
In this viewpoint, solutions of \eqref{eq:steinshape} can be written $$f(x)=- \int_0^\infty\E\left[h(X_t^x) \right]dt,$$ where $X_t^x$ solves the martingale problem
\[
X_t^x = x+ B_t -k_t, \quad k_t=\int_{0}^t \nu_{\Omega}(X_s)\,d|k|_s, \quad |k|_t =\int_0^t \textbf{1}_{X_t\in\mathbb{S}^{d-1}}\,d|k|_s
\]
with $B_t$ a standard Brownian motion on $\R^d$. When $\Omega$ is a smooth and strictly star-shaped domain, i.e. when $\forall x\in\partial\Omega,\, x\cdot\nu >0$, existence and uniqueness of $(X_t,k_t)$ are insured by Lions-Sznitman's result \cite{lions1984stochastic}. Note that the smoothness assumption can be relaxed by \cite{saisho1987stochastic}, and existence holds also in case of Lipschitz domains, see \cite{kwon1994reflected}. Let us conclude this section by mentioning that a probabilistic proof of Schauder's oblique estimate \eqref{eq:schauderoblique} is not known to the author, but seems to be an interesting avenue to explore in view of Stein's method for shapes.

\subsection{The Stein kernel formulation}\label{sec:steinkernel}

In this section, we state the kernel formulation of Stein's method for shapes. To achieve this, we need to make a few adjustments.
Let $\Omega$ be a $\St^{2,\alpha}_{\kappa,\Lambda}$-domain, and let $J$ be the Jacobian of the diffeomorphism $\psi:B_1\to \Omega$ defined in \eqref{def:psidiffeo} by $\psi(x) = R(x/|x|)x $.
Let us then take a function $h\in C^\alpha(\R^d)$, and consider the following variant of Problem \eqref{eq:steinshape} on the standard unit ball
\begin{equation}\label{eq:ksteinshape}
\left\{
    \begin{array}{ll}
        J(\theta)\,\langle I_d, \nabla^2 f(\theta)(D\psi)^{-1}(\theta) \rangle_{HS} = h(\theta)-\frac{1}{|\Omega|}\int_{\Omega}h, & \theta \in B_1 \\
        \nabla f(\theta) \cdot \psi (\theta) = 0, & \theta \in \partial B_1
    \end{array}
\right.
\end{equation}
where $(D\psi)^{-1}$ denotes the inverse of the Jacobian matrix of $\psi$, and $\langle\, , \, \rangle_{HS}$ denotes the Hilbert-Schmidt inner product for matrices.
If the radius function $R$ is close enough to $1$ in $\C^1$-norm, i.e. if $\Lambda$ is small enough in the notation of Definition \ref{def:regulardomain}, then the diffeomorphism $\psi$ is sufficiently close to the identity function in $\C^1$-norm to guarantee that the symmetric part of the matrix $D\psi$ is positive-definite, and therefore Problem \eqref{eq:ksteinshape} is elliptic. Moreover, since for all $\theta\in\mathbb{S}^{d-1}$, $\psi(\theta)\cdot \theta = R(\theta)|\theta|^2 = R(\theta)>0$, it follows that the boundary condition in Problem \eqref{eq:ksteinshape} is oblique. The regularity assumptions on $\Omega$ imply that the following Schauder estimate holds for some constant $C_{d,\alpha,\Lambda}>0$ depending only on the dimension $d$ and on the regularity parameters $\alpha$ and $\Lambda$,
\begin{equation}\label{eq:kernelobliqueschauder}
||\nabla^2 f||_{\C^\alpha(B_1)} \leq C_{d,\alpha,\Lambda}\, ||h||_{\C^\alpha(B_1)}
\end{equation}
Note that the dependance in $\kappa$ has been removed due to the change in boundary condition. Now the Schauder constant depends on the uniform lower bound of $\theta\cdot \psi(\theta)$, $\theta\in\mathbb{S}^{d-1}$, which is controlled by $\Lambda$, since $\theta\cdot \psi(\theta) = R(\theta)$.
The notion of Stein kernel can be translated into the geometric framework as follows.
\begin{defi}\label{def:steinkernel}
We will say that a matrix-valued function $\tau_\Omega: \Omega \to \mathcal{M}_d(\R)$ is a Stein kernel for $\Omega$, when for all $\C^1$ vector-valued functions $u:\bar{\Omega}\to \R^d$, it holds that
\[
\int_{\Omega} \langle \tau_\Omega, Du \rangle_{HS} = \int_{\partial\Omega} x\cdot u(x)
\] 
\end{defi}
\noindent
The divergence theorem immediately gives that the constant function $\tau_{B_1}=I_d$, equals to the identity matrix, is a Stein kernel for the standard unit ball. Stein's method can then be translated into the claim that the difference between any Stein kernel for $\Omega$ and the identity matrix $I_d$ should control some notion of distance between $\Omega$ and the ball. This is made rigorous by taking $f$ as a solution of \eqref{eq:steinshape} and computing
\begin{align*}
\int_{B_1} h - \frac{|B_1|}{|\Omega|}\int_{\Omega} h &= \int_{B_1} J(\theta)\,\langle I_d, \nabla^2 f(\theta) (D\psi)^{-1}(\theta) \rangle_{HS} \\
& =  \int_{B_1} J(\theta)\,\langle I_d, \nabla^2 f(\theta)D(\psi^{-1})(\psi(\theta)) \rangle_{HS} - \int_{\mathbb{S}^{d-1}} J(\theta)\nabla f(\theta)\cdot \psi(\theta) \\
& =  \int_{\Omega} \langle I_d, \nabla^2 f(\psi^{-1}(x)) D(\psi^{-1})(x) \rangle_{HS} - \int_{\partial\Omega} \nabla f(\psi^{-1}(x))\cdot x \\
& =  \int_{\Omega} \langle I_d, \nabla^2 f(\psi^{-1}(x)) D(\psi^{-1})(x) \rangle_{HS} - \int_{\Omega} \langle \tau_\Omega, D\left(\nabla f\circ \psi^{-1}(x) \right) \rangle_{HS} \\
& \leq \int_{\Omega} ||I_d-\tau_\Omega||_{HS} ||\nabla^2 f(\psi^{-1}(x)) D(\psi^{-1})(x) ||_{HS}\\
& \leq C_{d,\alpha,\Lambda}\, ||h||_{\C^\alpha(B_1)} \int_{\Omega} ||I_d-\tau_\Omega||_{HS}
\end{align*}
where we used the oblique boundary condition at line 2, the change of variable formula with the diffeomorphism $\psi$ at line 3, the definition of Stein kernel at line 4, the Cauchy-Schwarz inequality for the first inequality and the oblique Schauder estimate \eqref{eq:kernelobliqueschauder} for the second inequality. Recalling the definition \eqref{def:Halpha} of the $\alpha$-Zolotarev distance, we have therefore proved the following inequality
\begin{equation}\label{eq:ineqkernelSteinshape}
Z^\alpha(\Omega,B_1) \leq C_{d,\alpha,\Lambda}\,\int_{\Omega} ||I_d-\tau_\Omega||_{HS}
\end{equation}
which is Theorem \ref{thm:mainthmkernel}. We conclude this section by pointing out that the quantity 
\[
\underset{\tau_\Omega}{\inf} \int_{\Omega} ||I_d-\tau_\Omega||_{HS},
\]
where the infimum runs over all Stein kernels for $\Omega$, is the direct adaptation to the geometric framework of the so-called Stein discrepancy presented in Section \ref{sec:fundamStein}, see \cite{LNP} for its classical use with Gaussian distributions.

\section{Applications}

\subsection{Stability for the Brock-Weinstock inequality}

In this section, we will show how the Stein kernel formulation from Section \ref{sec:steinkernel} can be used to derive a stability result for the Brock-Weinstock inequality. We will consider the Sobolev trace inequality for $\C^1$-vector-valued functions $u : \bar{\Omega} \to \R^d$, on a domain $\Omega$, which states that
\begin{equation}\label{eq:Sobolevtrace}
\int_{\partial\Omega} ||u-\bar{u}||^2_2 \leq C\, \int_\Omega ||D u||^2_{HS}
\end{equation}
where $$\bar{u}:=\frac{1}{|\partial\Omega|}\int_{\partial\Omega} u,$$ and $||\cdot||_2$ denotes the Euclidean norm of vectors, and $||\cdot||_{HS}$ denotes the Hilbert-Schmidt norm for matrices. We will denote by $C_{BW}(\Omega)$ the best constant in \eqref{eq:Brock-Weinstock}, i.e. the smallest. Note that, as mentioned in the introduction, $1/C_{BW}(\Omega)$ coincides with the first non-zero Steklov eigenvalue $\sigma_1(\Omega)>0$, and furthermore 
\begin{equation}\label{eq:Brock-Weinstock}
C_{BW}(\Omega)\geq 1
\end{equation}
with equality attained if, and only if, $\Omega$ is a ball. We can now state our result, the proof of which follows the line of thought introduced in \cite{CFP} for the Poincaré constant of a probability distribution under moment constraints.

\begin{thm}\label{thm:stabBW}
Let $\Omega$ be a $\St^{2,\alpha}_{\kappa,\Lambda}$-domain, with $\Lambda$ small enough to guarantee that the positive part of $D\psi$ is positive-definite, and let $C_{BW}(\Omega)$ denote the inverse of its first non-zero Steklov eigenvalue. Assume that $\Omega$ is normalized in the following way
\begin{equation}\label{eq:normalizBW}
\int_{\partial\Omega}x=0,\quad \mathrm{and}\quad\int_{\partial\Omega}|x|^2 \geq d|\Omega|
\end{equation}
Then the following quantitative inequality holds for some constant $C>0$ depending only on the dimension $d$ and the regularity parameters $\alpha$ and $\Lambda$,
\begin{equation}\label{eq:stabBW}
C_{BW}(\Omega) \geq C_{BW}(B_1) + \frac{C}{d|\Omega|} Z^\alpha(\Omega,B_1)^2
\end{equation}
where $Z^\alpha$ denotes the $\alpha$-Zolotarev distance given in \eqref{def:holderdistanceshapes}.
\end{thm}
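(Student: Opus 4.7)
My plan, inspired by the Courtade-Fathi-Pananghat argument \cite{CFP} for the Poincaré constant under moment constraints, is to exhibit one specific Stein kernel $\tau^*$ for $\Omega$ whose $L^2$-distance to $I_d$ is directly controlled by the Brock-Weinstock deficit $C_{BW}(\Omega)-1$, and then to invoke the Stein-kernel bound of Theorem \ref{thm:mainthmkernel} combined with Cauchy-Schwarz.

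I would build $\tau^*$ from a vector-valued harmonic Neumann problem. Thanks to the first normalization $\int_{\partial\Omega}x=0$, for each $i\in\{1,\dots,d\}$ the scalar problem $\Delta u^*_i=0$ in $\Omega$, $\nabla u^*_i\cdot\nu=x_i$ on $\partial\Omega$ is solvable, and elliptic regularity on the $\St^{2,\alpha}_{\kappa,\Lambda}$-domain provides $u^*\in\C^{2,\alpha}(\bar\Omega;\R^d)$. A one-line integration by parts then shows that $\tau^*:=Du^*$ is a Stein kernel in the sense of Definition \ref{def:steinkernel}. Two test-function choices give the key identities: against $u(x)=x$, I get $\int_\Omega\mathrm{tr}(\tau^*)=\int_{\partial\Omega}|x|^2$; against $u^*$ itself, rewriting $\int_{\partial\Omega}x\cdot u^*=\int_{\partial\Omega}x\cdot(u^*-\bar{u^*})$ via $\int_{\partial\Omega}x=0$ and then applying Cauchy-Schwarz on $\partial\Omega$ together with the Sobolev trace inequality \eqref{eq:Sobolevtrace}, I obtain the energy bound $\int_\Omega\|\tau^*\|^2_{HS}\leq C_{BW}(\Omega)\int_{\partial\Omega}|x|^2$.

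Expanding $\|\tau^*-I_d\|^2_{HS}=\|\tau^*\|^2_{HS}-2\mathrm{tr}(\tau^*)+d$ and integrating, the two identities combine into
\[
\int_\Omega\|\tau^*-I_d\|^2_{HS}\leq (C_{BW}(\Omega)-1)\int_{\partial\Omega}|x|^2 - \Bigl(\int_{\partial\Omega}|x|^2-d|\Omega|\Bigr),
\]
and the subtracted parenthesis is nonnegative exactly by the second normalization in \eqref{eq:normalizBW}, so it may be discarded. Applying Theorem \ref{thm:mainthmkernel}, then Cauchy-Schwarz to pass from the $L^1$ to the $L^2$ integral of $\|\tau^*-I_d\|_{HS}$, and finally the Sobolev trace bound $\int_{\partial\Omega}|x|^2\leq C_{BW}(\Omega)\,d|\Omega|$ obtained by testing \eqref{eq:Sobolevtrace} against $u(x)=x$, I would arrive at
\[
Z^\alpha(\Omega,B_1)^2\leq C\,d|\Omega|^2\,C_{BW}(\Omega)\bigl(C_{BW}(\Omega)-1\bigr).
\]
Since $|\Omega|$ and $C_{BW}(\Omega)$ are both uniformly bounded in terms of $d,\alpha,\Lambda$ under the $\St^{2,\alpha}_{\kappa,\Lambda}$ assumption (by continuity of the Steklov eigenvalue under $\C^1$ perturbations of the ball), these factors get absorbed into the constant, producing the claimed inequality.

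The main obstacle I anticipate is the design of the Stein kernel itself: I need a single $\tau^*$ that simultaneously realises the sharp trace identity $\int_\Omega\mathrm{tr}(\tau^*)=\int_{\partial\Omega}|x|^2$ and the sharp energy bound controlled by $C_{BW}(\Omega)$, which is why the harmonic Neumann $u^*$ with boundary flux $x$ is the natural choice. The second normalization $\int_{\partial\Omega}|x|^2\geq d|\Omega|$ plays a pivotal role: it is exactly what is needed to kill the leftover term in the expansion of $\|\tau^*-I_d\|^2_{HS}$ and thereby obtain a linear (rather than a weaker) dependence of the Stein discrepancy on the Brock-Weinstock deficit.
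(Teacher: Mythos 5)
Your proposal is correct and follows essentially the same route as the paper: the harmonic Neumann extension $u^*$ you construct is precisely the Riesz representative $g$ that the paper obtains abstractly from the continuity of $\Phi(u)=\int_{\partial\Omega}x\cdot u$ on the Hilbert space $E$, so your $\tau^*=Du^*$ is the paper's Stein kernel $Dg$, and the two key estimates (the trace identity against $u(x)=x$ and the energy bound from Cauchy--Schwarz plus the trace inequality) are identical to \eqref{eq:opnormbound}--\eqref{eq:ctrlnoyau}. Your final bookkeeping is slightly different --- you drop the nonnegative term $\int_{\partial\Omega}|x|^2-d|\Omega|$ and then control $\int_{\partial\Omega}|x|^2$ by $C_{BW}(\Omega)\,d|\Omega|$, picking up an extra $C_{BW}(\Omega)$ factor that you absorb into the constant, whereas the paper rearranges to get $(C_{BW}(\Omega)-1)d|\Omega|$ directly (a step that silently requires $C_{BW}(\Omega)\le 2$); your decomposition is arguably more robust but otherwise the proofs coincide.
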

\noindent
It follows from the isoperimetric inequality with weight $|x|^2$, that if $\Omega$ is centered and satisfies $|\Omega|=|B_1|$, then the normalizing conditions \eqref{eq:normalizBW} are satisfied, see \cite{betta1999weighted}. Indeed,
\[
\int_{\partial\Omega}|x|^2 = \mathrm{Per}_{|x|^2}(\partial\Omega) \geq \int_{\mathbb{S}^{d-1}} |x|^2 = |\partial B_1| = d|B_1| = d|\Omega|.
\]
Consequently, we can deduce Theorem \ref{thm:mainstabBW} stated in Section \ref{sec:mainresults}.
\begin{corol}\label{corol:stabBW}
Let $\Omega$ be a $\St^{2,\alpha}_{\kappa,\Lambda}$-domain with $\Lambda$ small enough, and let $C_{BW}(\Omega)$ denote the inverse of its first non-zero Steklov eigenvalue. Assume that $\Omega$ satisfies $|\Omega|=|B_1|$. Then the following quantitative inequality holds for some constant $C>0$ depending only on the dimension $d$ and the regularity parameters $\alpha$ and $\Lambda$,
\[
C_{BW}(\Omega) \geq C_{BW}(B_1) + \frac{C}{d|\Omega|} Z^\alpha(\Omega,B_1)^2.
\]
\end{corol}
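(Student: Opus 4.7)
The plan is to reduce Corollary \ref{corol:stabBW} to an immediate application of Theorem \ref{thm:stabBW} by verifying its two normalization conditions \eqref{eq:normalizBW} under the single hypothesis $|\Omega|=|B_1|$.

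First I would translate $\Omega$ by its boundary barycenter $\bar{x} := |\partial\Omega|^{-1}\int_{\partial\Omega} x$ to obtain $\Omega' = \Omega - \bar{x}$ satisfying $\int_{\partial\Omega'} x = 0$. Since $|\Omega'| = |\Omega|$ and the first Steklov eigenvalue (hence $C_{BW}$) is translation invariant, both quantities appearing in the target inequality are preserved. The $\alpha$-Zolotarev distance is not translation invariant, but the hypothesis $\|R-1\|_{\C^{1,\alpha}} \leq \Lambda$ forces $|\bar{x}| = O(\Lambda)$, so $\Omega'$ remains in a regularity class $\St^{2,\alpha}_{\kappa',\Lambda'}$ with comparable parameters and $|Z^\alpha(\Omega,B_1) - Z^\alpha(\Omega',B_1)|$ is of order $\Lambda^\alpha$, which can be absorbed into the final constant.

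For the second-moment condition on $\Omega'$, I would invoke the sharp weighted isoperimetric inequality \eqref{eq:weightedisop} of Betta-Brock-Mercaldo-Posteraro. Since $|\Omega'| = |B_1|$ and $|x|^2 \equiv 1$ on $\mathbb{S}^{d-1}$,
\[
\int_{\partial\Omega'} |x|^2 = \mathrm{Per}_{|x|^2}(\partial\Omega') \geq \mathrm{Per}_{|x|^2}(\partial B_1) = |\partial B_1| = d|B_1| = d|\Omega'|,
\]
which is precisely the second condition of \eqref{eq:normalizBW}, as already recorded in the paragraph immediately preceding the corollary statement.

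With both normalization conditions in place for $\Omega'$, Theorem \ref{thm:stabBW} applies and directly yields the bound, which by translation invariance transfers back to $\Omega$. The main technical obstacle is the perturbation bookkeeping in the translation step -- verifying that the regularity parameters and the $\alpha$-Zolotarev distance are preserved up to constants that can be absorbed into $C$ -- but this is essentially routine thanks to the smallness of $\Lambda$, which keeps $\bar{x}$ close to the origin and ensures that $0$ remains a uniform star-point of $\Omega'$.
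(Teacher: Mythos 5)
The paper's own proof does not include a translation step. It reads the corollary with the centering condition $\int_{\partial\Omega}x=0$ of Theorem~\ref{thm:stabBW} still implicitly in force, and simply observes that the second normalization condition $\int_{\partial\Omega}|x|^2\geq d|\Omega|$ follows from the volume constraint $|\Omega|=|B_1|$ together with the weighted isoperimetric inequality \eqref{eq:weightedisop}. That is precisely the calculation displayed in the paragraph preceding the corollary, and the derivation ends there. Your observation about the second-moment condition is correct and matches the paper; the added translation step is where the argument diverges and, as explained below, breaks.

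The gap is in the claim that $|Z^\alpha(\Omega,B_1)-Z^\alpha(\Omega',B_1)|=O(\Lambda^\alpha)$ ``can be absorbed into the final constant.'' This is an \emph{additive} error, not a multiplicative one. The target inequality has the form $C_{BW}(\Omega)-1\geq c\,Z^\alpha(\Omega,B_1)^2$, and from Theorem~\ref{thm:stabBW} applied to $\Omega'$ you obtain only $C_{BW}(\Omega)-1\geq c\,Z^\alpha(\Omega',B_1)^2$. To pass to $\Omega$ you would need $Z^\alpha(\Omega',B_1)\geq c'\,Z^\alpha(\Omega,B_1)$; the bound $Z^\alpha(\Omega',B_1)\geq Z^\alpha(\Omega,B_1)-C|\bar{x}|^\alpha$ gives nothing of this sort when $Z^\alpha(\Omega,B_1)$ is itself of order $|\bar{x}|^\alpha$, which is exactly the regime of a stability result. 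In fact the translation step cannot be made to work: if $\Omega=B_1+v$ is a slightly translated ball (which belongs to $\St^{2,\alpha}_{\kappa,\Lambda}$ for $|v|$ small), then $C_{BW}(\Omega)=C_{BW}(B_1)$ by translation invariance of the Steklov spectrum, while $Z^\alpha(\Omega,B_1)\sim|v|^\alpha>0$, so no inequality of the form $C_{BW}(\Omega)\geq C_{BW}(B_1)+C\,Z^\alpha(\Omega,B_1)^2$ can hold without the centering constraint. The correct reading is therefore that the corollary inherits $\int_{\partial\Omega}x=0$ from Theorem~\ref{thm:stabBW}, and only the second normalization condition is being dispensed with via the weighted isoperimetric inequality.
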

\noindent
\textit{Proof of Theorem \ref{thm:stabBW}.}
Let us consider the map
\[
\Phi : u \mapsto \int_{\partial\Omega} x \cdot u(x).
\]
The functional $\Phi$ is a linear form on the space 
\[
E := \left\{ u: \bar{\Omega}\to\R^d\,:\, \int_{\partial\Omega} u =0\,\,\mathrm{and}\, \int_{\Omega} \left| Du \right|_{HS}^2<\infty \right\},
\]
which is a Hilbert space equipped with the inner product $\langle f,g \rangle_E :=\int_{\Omega} \langle Df, Dg \rangle_{HS} $.
By using the Sobolev trace inequality \eqref{eq:Sobolevtrace} on $\Omega$, we obtain that $\Phi$ is continuous on $E$. Indeed,
\[
|\Phi(u)| = \left|\int_{\partial\Omega} x\cdot u(x)\right| 
 \leq \left(\int_{\partial\Omega} |x|^2 \int_{\partial\Omega}|u|^2 \right)^{1/2}
 \leq \left(\int_{\partial\Omega} |x|^2 \right)^{1/2} \left(C_{BW}(\Omega)\int_{\Omega} ||Du||_{HS}^2\right)^{1/2}
\]
Moreover, we get that $\Phi$ has an operator norm satisfying
\begin{equation}\label{eq:opnormbound}
|||\Phi|||_{op} \leq \left(C_{BW}(\Omega)\int_{\partial\Omega} |x|^2 \right)^{1/2}
\end{equation}
Hence, by using the Riesz representation theorem, one can find some $g\in E$ such that 
\[
\forall u\in E,\quad \Phi(u) = \int_{\Omega} \langle D g, D u \rangle_{HS}.
\]
In other words, $D g$ is a Stein kernel for $\Omega$ in the sense of Definition \ref{def:steinkernel}, and moreover, its norm is equal to the operator norm of $\Phi$, and therefore by \eqref{eq:opnormbound}, we get that
\begin{equation}\label{eq:ctrlnoyau}
\int_{B_1} \left|| D g| \right|_{HS}^2 \leq C_{BW}(\Omega)\int_{\partial\Omega} |x|^2
\end{equation}
We can therefore compute
\begin{align*}
\int_{\Omega} ||I_d-D g||_{HS}^2 &= d |\Omega| + \int_{\Omega} ||D g ||_{HS}^2 - 2\int_{\Omega} \langle I_d, D g \rangle_{HS} \\
& =  d |\Omega| + \int_{\Omega} ||D g ||_{HS}^2 - 2\int_{\partial\Omega} |x|_{HS}^2 \\
& \leq d |\Omega| + \left(C_{BW}(\Omega)-2\right)\int_{\partial\Omega} |x|_{HS}^2,
\end{align*}
where we used the fact that $g$ is a Stein kernel at line 2, and Inequality \eqref{eq:ctrlnoyau} at line 3. By using the normalization conditions \eqref{eq:normalizBW}, we obtain
\[
\int_{\Omega} ||I_d-D g||_{HS}^2 \leq \left( C_{BW}(\Omega) - 1 \right)d |\Omega|,
\]
which, in combination with \eqref{eq:ineqkernelSteinshape}, gives
\[
C_{BW}(\Omega) \geq C_{BW}(B_1) + \frac{Z^\alpha(\Omega,B_1)^2}{d|\Omega|C_{d,\alpha,\Lambda}^2}, 
\]
concluding the proof.
$\qed$

\subsection{Isoperimetric stability under a Steklov constraint}\label{ssec:stabsteklovconstraint}

In the previous section, we have seen how Stein's method for shapes can be used to prove stability for the first non-zero Steklov eigenvalue $\sigma_1=1/C_{BW}(\Omega)$ under a volume constraint. In this section, we study the converse and show that a quantitative isoperimetric inequality follows from a constraint on the first non-zero Steklov eigenvalue.

\begin{prop}\label{prop:stabunderSteklovconstraint}
Let $\Omega$ be a $\St^{2,\alpha}_{\kappa,\Lambda}$-domain centered such that $\int_{\partial\Omega} x =0$. If its first non-zero Steklov eigenvalue satisfies $\sigma_1 \geq 1 $, then the following quantitative isoperimetric inequality holds for some constant $C>0$ depending only on the dimension $d$ and the regularity parameters $\alpha, \kappa$ and $\Lambda$,
\begin{equation}\label{eq:stabunderSteklovconstraint}
|\partial\Omega| \geq |\partial B_r| + C\,Z^\alpha(\Omega,B_1)^2,
\end{equation}
where $Z^\alpha$ denotes the $\alpha$-Zolotarev distance given in \eqref{def:holderdistanceshapes}, and $B_r$ is a ball of the same volume as $\Omega$. 
\end{prop}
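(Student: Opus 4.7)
First, I would apply the Sobolev trace inequality \eqref{eq:Sobolevtrace} to the identity vector field $u(x)=x$, which is boundary-centered by hypothesis. This yields $\int_{\partial\Omega}|x|^2\,d\mathcal{H}^{d-1}\le C_{BW}(\Omega)\int_\Omega \|Du\|_{HS}^2\,dx = C_{BW}(\Omega)\,d|\Omega|$, and the assumption $\sigma_1(\Omega)\ge 1$ (i.e.\ $C_{BW}(\Omega)\le 1$) produces the key bound $\int_{\partial\Omega}|x|^2 \le d|\Omega|$. This is the analogue, with reversed sign, of the constraint driving the proof of Theorem \ref{thm:stabBW}.

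Next, I would invoke Theorem \ref{thm:mainthm} and the Cauchy-Schwarz inequality to obtain $Z^\alpha(\Omega,B_1)^2\le C\,|\partial\Omega|\int_{\partial\Omega}\left|\tfrac{x}{|x|}-\nu\right|^2 d\mathcal{H}^{d-1}$. Expanding $|x/|x|-\nu|^2 = 2(1 - x/|x|\cdot \nu)$ and applying the divergence theorem to the vector field $x/|x|$ (whose divergence equals $(d-1)/|x|$), the boundary integral reduces to $2|\partial\Omega| - 2(d-1)\int_\Omega dx/|x|$. Rewriting in polar coordinates gives $(d-1)\int_\Omega dx/|x| = \int_{\mathbb{S}^{d-1}} R(\theta)^{d-1}\,d\theta$, and Jensen's inequality applied to the concave function $t\mapsto t^{(d-1)/d}$ yields $\int_{\mathbb{S}^{d-1}} R^{d-1}\,d\theta \le |\mathbb{S}^{d-1}|^{1/d}\bigl(\int R^d \,d\theta\bigr)^{(d-1)/d} = |\partial B_r|$. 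Altogether,
\[
\int_{\partial\Omega}\left|\tfrac{x}{|x|}-\nu\right|^2 d\mathcal{H}^{d-1} = 2\bigl(|\partial\Omega|-|\partial B_r|\bigr) + 2\Bigl(|\partial B_r| - \int_{\mathbb{S}^{d-1}} R^{d-1}\,d\theta\Bigr).
\]

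The main obstacle will be bounding the Jensen deficit $|\partial B_r|-\int R^{d-1}\,d\theta$ by a constant multiple of the isoperimetric deficit $|\partial\Omega|-|\partial B_r|$. To leading order in $R-\bar R$, both deficits are proportional to $\int(R-\bar R)^2\,d\theta$: the Jensen deficit scales as $(d-1)\int(R-\bar R)^2$, while the second variation of the perimeter functional around $B_r$ yields $\tfrac{1}{2}[\int|\nabla_S R|^2 - (d-1)\int(R-\bar R)^2]$. The centering hypothesis $\int_{\partial\Omega}x=0$ eliminates the degree-$1$ spherical harmonic component of $R-\bar R$, so the spherical Poincar\'e inequality with second Laplacian eigenvalue $2d$ provides the improved gap $\int|\nabla_S R|^2 \ge 2d\int(R-\bar R)^2$, giving an asymptotic Jensen-to-isoperimetric deficit ratio of $\tfrac{d-1}{d+1}$. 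Upgrading this asymptotic comparison to a uniform bound is the technical heart of the argument, and will rely on the $\St^{2,\alpha}_{\kappa,\Lambda}$-regularity (together with the constraint of the first step) to control higher-order corrections. Combining everything with the regularity-controlled boundedness of $|\partial\Omega|$ then gives $Z^\alpha(\Omega,B_1)^2 \le C'\bigl(|\partial\Omega|-|\partial B_r|\bigr)$, which rearranges to the desired inequality.
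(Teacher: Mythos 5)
Your approach diverges from the paper's at a crucial point, and the divergence creates a gap that your argument does not close. You invoke Theorem \ref{thm:mainthm}, which bounds $Z^\alpha$ by the \emph{unweighted} oscillation integral $\int_{\partial\Omega}|x/|x|-\nu|$. The paper instead uses Inequality \eqref{eq:mainStein}, which involves the $R$-weighted quantity $|\theta - R(\theta)\nu_\Omega(\theta)|$; on $\partial\Omega$ this is $|x/|x| - |x|\nu|$, and the remark after \eqref{eq:mainStein} explicitly flags that this $R$-dependence is what is exploited in Section \ref{ssec:stabsteklovconstraint}. The difference matters: expanding the $R$-weighted square gives $1 - 2x\cdot\nu + |x|^2$, so after the divergence theorem one lands on $|\partial\Omega| - 2d|\Omega| + \int_{\partial\Omega}|x|^2$, and the Steklov hypothesis controls precisely the $\int_{\partial\Omega}|x|^2$ term, yielding $|\partial\Omega| - d|\Omega| = |\partial\Omega| - |\partial B_r|$ in one line. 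Your unweighted expansion gives $2\bigl(|\partial\Omega| - \int R^{d-1}\bigr)$, which by your Jensen step is bounded \emph{below} (not above) by $2(|\partial\Omega| - |\partial B_r|)$, so the inequality runs in the wrong direction unless you can bound the Jensen deficit $|\partial B_r| - \int R^{d-1}$ by a multiple of the isoperimetric deficit.

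That Jensen-to-isoperimetric comparison is the unproved step in your argument, and it is a genuine gap, not a routine technical check. Your second-variation/spherical-harmonics computation is only asymptotic near $R\equiv 1$; the $\St^{2,\alpha}_{\kappa,\Lambda}$ class allows $\Lambda$ of order one, so a uniform nonperturbative estimate is needed and is not supplied. Moreover, in your argument the Steklov hypothesis is used only in the first step to obtain $\int_{\partial\Omega}|x|^2 \le d|\Omega|$, and that bound is then never invoked: your steps 2--5 are purely geometric statements about $R$. If the Jensen deficit were controlled by the isoperimetric deficit unconditionally, you would have proved the full quantitative isoperimetric inequality for star-shaped domains with no Steklov assumption at all, which should make you suspicious. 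The paper's choice of the $R$-weighted oscillation index is precisely the device that makes $\int_{\partial\Omega}|x|^2$ appear so that the Steklov constraint has something to bite on.
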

\begin{proof}
Recalling the definition of $J$ and $\nu_\Omega$ from Section \ref{sec:geomsetting}, one can see that
\begin{align*}
\int_{\mathbb{S}^{d-1}} |\theta-R(\theta)\nu_\Omega(\theta)|^2\,J\,d\theta & = \int_{\partial\Omega} \left| \frac{x}{|x|} - |x|\nu \right|^2 \\
& =  \int_{\partial\Omega} \left( 1 -2 x\cdot\nu + |x|^2 \right) \\
& = |\partial \Omega | - 2 \int_{\Omega} \nabla \cdot x + \int_{\partial\Omega} |x|^2\\
& = |\partial \Omega | - 2 d |\Omega| + \int_{\partial\Omega} |x|^2\\
& \leq |\partial \Omega | - 2 d |\Omega| + C_{BW}(\Omega)\int_\Omega ||I_d||_{HS}^2 \\
& = |\partial \Omega | + d |\Omega| \left(C_{BW}(\Omega) -2 \right)\\
& \leq |\partial \Omega | - d |\Omega|,
\end{align*}
where the Brock-Weinstock inequality \eqref{eq:Brock-Weinstock} is used at line $5$, and the constraint $\sigma=1/C_{BW}(\Omega)\geq 1$ is used at the last line. Inequality \eqref{eq:stabunderSteklovconstraint} follows then from Inequality \eqref{eq:mainStein}, and from the fact that $d|\Omega|=d|B_r|=|\partial B_r|$.
\end{proof}

\subsection{Stability for a combined weighted-unweighted isoperimetry}\label{ssec:stabcombinedisop}

In this section, we show how Stein's method for shapes immediately gives a quantitative isoperimetric inequality with a trade-off between the usual perimeter and the boundary momentum, i.e. the perimeter weighted by the squared norm $|x|^2$.
\begin{prop}\label{prop:combinedstab}
Let $\Omega$ be a $\St^{2,\alpha}_{\kappa,\Lambda}$-domain, with the volume constraint $|\Omega|=|B_1|$. Then the following quantitative combined weighted-unweighted isoperimetry holds for some constant $C>0$ depending only on the dimension $d$ and the regularity parameters $\alpha, \kappa$ and $\Lambda$,
\begin{equation}\label{eq:combinedstab}
\delta(\Omega) + \delta_{|x|^2}(\Omega) \geq C\,Z^\alpha(\Omega,B_1)^2,
\end{equation}
where $\delta(\Omega) = |\partial \Omega | - |\partial B_1|$ denotes the usual isoperimetric deficit, $\delta_{|x|^2}(\Omega) = \int_{\partial\Omega} |x|^2 - |\partial B_1| $ denotes the isoperimetric deficit with weight function $|x|^2$, and $Z^\alpha$ denotes the $\alpha$-Zolotarev distance given in \eqref{def:holderdistanceshapes}.
\end{prop}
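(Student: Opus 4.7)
The plan is to mimic the computation at the start of the proof of Proposition \ref{prop:stabunderSteklovconstraint} almost verbatim, but stop one step earlier so that both the ordinary perimeter and the weighted perimeter remain visible, and then use the volume normalization $|\Omega|=|B_1|$ to recognize the two isoperimetric deficits.

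First I would express the right-hand side of the squared Stein inequality \eqref{eq:mainStein} as a boundary integral on $\partial \Omega$. Using the diffeomorphism $\psi$ and the definitions of $J$ and $\nu_\Omega$ recalled in Section \ref{sec:geomsetting}, one gets
\[
\int_{\mathbb{S}^{d-1}} |\theta-R(\theta)\nu_\Omega(\theta)|^2\,J\,d\theta = \int_{\partial\Omega}\left| \tfrac{x}{|x|} - |x|\nu \right|^{2} = \int_{\partial\Omega}\left(1 - 2\, x\cdot\nu + |x|^{2}\right).
\]
Expanding, the first term gives $|\partial\Omega|$, the middle term gives $-2\int_{\Omega}\nabla\!\cdot\!x = -2d|\Omega|$ by the divergence theorem, and the third term is $\int_{\partial\Omega}|x|^{2}$. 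Thus
\[
\int_{\mathbb{S}^{d-1}} |\theta-R(\theta)\nu_\Omega(\theta)|^2\,J\,d\theta = |\partial\Omega| - 2d|\Omega| + \int_{\partial\Omega}|x|^{2}.
\]

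Second I would use the volume normalization. Since $|\Omega|=|B_1|$, one has $d|\Omega| = d|B_1| = |\partial B_1|$, so the right-hand side above rewrites as
\[
\bigl(|\partial\Omega| - |\partial B_1|\bigr) + \Bigl(\int_{\partial\Omega}|x|^{2} - |\partial B_1|\Bigr) = \delta(\Omega) + \delta_{|x|^{2}}(\Omega).
\]
Combining this identity with the squared Stein inequality \eqref{eq:mainStein} immediately yields
\[
Z^\alpha(\Omega,B_1)^{2} \leq C_{d,\alpha,\kappa,\Lambda}\,\bigl(\delta(\Omega) + \delta_{|x|^{2}}(\Omega)\bigr),
\]
which is \eqref{eq:combinedstab} with $C = 1/C_{d,\alpha,\kappa,\Lambda}$.

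There is essentially no obstacle: the key algebraic identity has already been established in the proof of Proposition \ref{prop:stabunderSteklovconstraint}, and the new input is only the reinterpretation of the $-2d|\Omega|$ term under the volume normalization, which converts the three-term expression into the sum of the two isoperimetric deficits. Compared with the previous proposition, we neither need the Brock--Weinstock inequality nor a Steklov constraint, because we do not bound $\int_{\partial\Omega}|x|^{2}$ from above — instead we keep it as part of the weighted deficit.
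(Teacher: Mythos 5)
Your proposal is correct and follows the paper's own proof essentially verbatim: both stop the computation from the previous proposition at the line $|\partial\Omega| - 2d|\Omega| + \int_{\partial\Omega}|x|^2$, use $d|\Omega| = |\partial B_1|$ to identify the two deficits, and conclude via Inequality \eqref{eq:mainStein}.
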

\begin{proof}
By stoppping the calculation earlier in the proof of Proposition \ref{prop:stabunderSteklovconstraint}, we obtain
\begin{align*}
\int_{\mathbb{S}^{d-1}} |\theta-R(\theta)\nu_\Omega(\theta)|^2\,J\,d\theta & = \int_{\partial\Omega} \left| \frac{x}{|x|} - |x|\nu \right|^2 \\
& =  \int_{\partial\Omega} \left( 1 -2 x\cdot\nu + |x|^2 \right) \\
& = \left(|\partial \Omega | -  d |\Omega|\right) + \left( \int_{\partial\Omega} |x|^2 -d|\Omega| \right).
\end{align*}
Now, if $\Omega$ is normalized by the volume constraint $|\Omega|=|B_1|$, one recognizes the term $|\partial\Omega| - d|\Omega|$ as the usual isoperimetric deficit, and the term $\int_{\partial\Omega} |x|^2 - d|\Omega|$ as the weighted isoperimetric deficit. We then immediately get the result from Inequality \eqref{eq:mainStein}.
\end{proof}
\begin{rmq}\label{rmq:perturbation}
\,
\begin{itemize}
\item An immediate consequence of Proposition \ref{prop:combinedstab} is that it recovers a quantitative weighted or unweighted isoperimetric inequality as soon as the weighted and unweighted perimeters can be compared. Recalling the definition of $R$ in Section \ref{sec:geomsetting}, we can perform a perturbative comparison between those two perimeters. Write $R=1+\varepsilon$ for some small function $\varepsilon:\mathbb{S}^{d-1}\to \R_+$.
First, we want that $|\Omega|=|B_1|$ at order $2$, so we compute
\[
\left|\Omega\right| = \int_{\mathbb{S}^{d-1}} d\theta \int_0^{R(\theta)} t^{d-1}dt = \left|B_1\right| + \int_{\mathbb{S}^{d-1}} \varepsilon + \frac{d-1}{2}\int_{\mathbb{S}^{d-1}} \varepsilon^2 + o(\varepsilon^2)	
\]
and we impose $$\int_{\mathbb{S}^{d-1}} \varepsilon = - \frac{d-1}{2}\int_{\mathbb{S}^{d-1}} \varepsilon^2 $$  so that the volume is preserved at order $2$.
Second, we expand the weighted perimeter as
\begin{align*}
\int_{\partial\Omega} |x|^2 & = \int_{\mathbb{S}^{d-1}} R^{d+1} \sqrt{1 +\frac{1}{R^2}\left|\nabla_{\mathbb{S}^{d-1}}R\right|^2} \\
& = \int_{\mathbb{S}^{d-1}}\left( 1 + (d+1)\varepsilon + \frac{d(d+1)}{2}\varepsilon^2 + \frac{1}{2}\left|\nabla_{\mathbb{S}^{d-1}}\varepsilon\right|^2 \right) + o(\varepsilon^2,|\nabla\varepsilon|^2),
\end{align*}
and the usual perimeter as
\begin{align*}
\int_{\partial\Omega} 1 & = \int_{\mathbb{S}^{d-1}} R^{d-1} \sqrt{1 +\frac{1}{R^2}\left|\nabla_{\mathbb{S}^{d-1}}R\right|^2} \\
& = \int_{\mathbb{S}^{d-1}}\left( 1 + (d-1)\varepsilon + \frac{(d-1)(d-2)}{2}\varepsilon^2 + \frac{1}{2}\left|\nabla_{\mathbb{S}^{d-1}}\varepsilon\right|^2 \right) + o(\varepsilon^2,|\nabla\varepsilon|^2).
\end{align*}
So at order 2, for $\int_{\mathbb{S}^{d-1}} \varepsilon = - \frac{d-1}{2}\int_{\mathbb{S}^{d-1}} \varepsilon^2$, we have 
\[
|\partial\Omega| - \int_{\partial\Omega} |x|^2 = -d \int_{\mathbb{S}^{d-1}} \varepsilon^2  + o(\varepsilon^2,|\nabla\varepsilon|^2) 
\]
and therefore we can see that under a volume constraint, the weighted perimeter is larger than the usual perimeter at order $2$.
However, as can be seen by going further in the expansion, this inequality is not preserved at other orders.

\item If the domain $\Omega$ is convex and centered, with the same volume as the unit ball $|\Omega|=|B_1|$, then its boundary momentum is always larger than its usual unweighted perimeter:
\[
\int_{\partial\Omega} |x|^2 \geq |\partial\Omega|.
\] 
This inequality has been proved by Bucur, Ferone, Nitsch and Trombetti in \cite{bucur2021weinstock} and constitutes a refinement of Brock's inequality \cite{brock2001isoperimetric}. It should be noted that, as indicated in \cite{bucur2021weinstock} and in accordance with the perturbative expansion of the previous item, the convexity assumption is necessary to obtain this inequality. As an immediate consequence of Proposition \ref{prop:combinedstab}, we get a quantitative version of the weighted isoperimetric inequality \eqref{eq:weightedisop} restricted to convex domains.

\item On this subject, let us point out that in the case of a constraint on the perimeter instead of the volume, the boundary momentum is maximized by the balls in dimension $2$, but this is no longer the case in higher dimension, see \cite[Section 3]{gavitone2020quantitative} and \cite[Section 4]{la2023some}, where another functional is introduced to renormalize the boundary momentum.
\end{itemize}
\end{rmq}
\noindent
\textbf{Aknowledgements}\\
I would like to thank Xavier Lamy for the many discussions that helped me to clarify my ideas each time. I would also like to thank Domenico Angelo La Manna for pointing out the references \cite{fusco2023some,gavitone2020quantitative,la2023some}, and also for pointing out an inaccuracy in the expansion of Remark \ref{rmq:perturbation}.

\bibliographystyle{plain}
\bibliography{biblio}
\end{document}